\theoremstyle{plain}
\newtheorem{theorem}{Theorem}[section]
\newtheorem{corollary}[theorem]{Corollary}
\newtheorem{lemma}[theorem]{Lemma}
\newtheorem{proposition}[theorem]{Proposition}
\newcommand{\C}{\mathbb C}
\newcommand{\R}{\mathbb R}
\newcommand{\Z}{\mathbb Z}
\newcommand{\Sl}{\mathfrak{sl}(2,\C)}
\newcommand{\SU}{\mathrm{SU}(2)}
\newcommand{\su}{\mathfrak{su}(2)}
\numberwithin{equation}{section}
\title[Infinitesimal deformations of cmc surfaces of finite type in $\mathbb{S}^3$]{On infinitesimal deformations of cmc surfaces of finite type in the 3-sphere}
\author{M. Kilian}
\address{M. Kilian, Department of Mathematics,
University College Cork, Ireland.}
\email{m.kilian@ucc.ie}
\author{M. U. Schmidt}
\address{Institut f\"ur Mathematik,
Universit\"at Mannheim, 68131 Mannheim, Germany}
\email{schmidt@math.uni-mannheim.de}
\begin{document}

\thanks{{\it Mathematics Subject Classification.}
53A10, 53C17. \today}


\begin{abstract}
We describe infinitesimal deformations of constant mean curvature
surfaces of finite type in the 3-sphere. We use Baker-Akhiezer
functions to describe such deformations, as well as polynomial
Killing fields and the corresponding spectral curve to distinguish
between isospectral and non-isospectral deformations.
\end{abstract}
\maketitle


\begin{center}
  \sc Introduction
\end{center}

The theory of constant mean curvature ({\sc{cmc}}) surfaces, and
more generally that of harmonic maps has developed greatly over the
past decades. One reason for this may be that there are two main
approaches possible towards the subject which cross fertilize each
other: geometric PDE methods and integrable system techniques. In
the late 1990's the integrable system approach culminated in a very
general (local) description of such harmonic maps in terms of loop
Lie algebra valued 1-forms by Dorfmeister, Pedit and Wu. In
particular, in the case of {\sc{cmc}} tori, Pinkall and Sterling
\cite{PinS}, and independently Hitchin \cite{Hit:tor} showed that a
solution to the structure equation (the sinh-Gordon equation) can be
represented by a hyperelliptic Riemann surface, the so called
spectral curve. The solution in this case is said to be of finite
type.

Pinkall and Sterling \cite{PinS} construct infinitesimal
deformations of {\sc{cmc}} tori in $\mathbb{R}^3$. We carry over
their constructions to {\sc{cmc}} tori in $\mathbb{S}^3$. Such tori
in $\mathbb{S}^3$ have non-isospectral deformations changing the
mean curvature in contrast to {\sc{cmc}} tori in $\mathbb{R}^3$. For
this reason we consider also deformations that change the mean
curvature. The corresponding normal variation then obeys an
inhomogenuos Jacobi equation.

We briefly outline the contents of the paper: In the first section
we recall some facts about {\sc{cmc}} surfaces in $\mathbb{S}^3$ and
introduce our notation. In particular, we recall the notion of
extended frame and spectral curve for {\sc{cmc}} surfaces of finite
type. In the second section we construct Jacobi fields and
parametric Jacobi fields for {\sc{cmc}} tori in $\mathbb{S}^3$. In
the third section we construct homogenous Jacobi fields in terms of
the Baker-Akhiezer function of the $\sinh$-Gordon equation. Finally
we show that the Fermi curve of the Jacobi operator is isomorphic to
the spectral curve of the $\sinh$-Gordon equation. In the last
section we extend the deformation to a deformation of the
corresponding polynomial Killing field, and exhibit both
isospectral, and non-isospectral deformations in terms of polynomial
Killing fields.

\section{Conformal {\sc{cmc}} immersions into $\mathbb{S}^3$}
\label{sec:CONF}

\subsection{Extended frames}
We identify the 3-sphere $\mathbb{S}^3 \subset \R^4$ with
$\mathbb{S}^3 \cong \left(\,\SU\times\SU\,\right)/\,\mathrm{D}$,
where $\mathrm{D}$ is the diagonal in $\SU\times\SU$.
The Lie algebra of the matrix Lie group $\SU$ is
$\mathfrak{su}(2)$, equipped with the commutator
$[\,\cdot,\,\cdot\,]$.
For $\alpha,\,\beta \in \Omega^1(T\R^2,\mathfrak{su}(2))$
smooth $1$--forms on $\R^2 \cong T\R^2$ with values in
$\mathfrak{su}(2)$,
we define the $\mathfrak{su}(2)$--valued
$2$--form
\begin{equation*}
  [ \alpha \wedge \beta ] (X,\,Y) =
  [\alpha(X),\,\beta(Y)] - [\alpha(Y),\,\beta(X)],
\end{equation*}
for $X,\,Y \in T\R^2$. Let $L_g:h \mapsto gh$ be left
multiplication in $\SU$. Then by left translation,
the tangent bundle is $T\SU \cong \SU \times \mathfrak{su}(2)$
and $\theta : T\SU \to \mathfrak{su}(2),\,
v_g \mapsto (dL_{g^{-1}})_g v_g$ is
the (left) Maurer--Cartan form. It satisfies the
Maurer--Cartan equation
\begin{equation} \label{eq:MC_equation}
 2\, d\theta + [ \theta \wedge \theta ] = 0.
\end{equation}
For a map $F:\R^2 \to \SU$, the pullback
$\alpha = F^*\theta$ also satisfies \eqref{eq:MC_equation},
and conversely, every
solution $\alpha \in \Omega^1(\R^2,\mathfrak{su}(2))$ of
\eqref{eq:MC_equation} integrates to a smooth map
$F:\R^2 \to \SU$ with $\alpha = F^*\theta$.

Complexifying the tangent bundle $T\R^2$ and decomposing
into $(1,0)$ and $(0,1)$ tangent spaces, and writing
$d=\partial + \bar{\partial}$, we may split
$\omega \in \Omega^1(M,\mathfrak{su}(2))$ into the
$(1,0)$ part $\omega^\prime$, the $(0,1)$ part
$\omega^{\prime \prime}$ and write
$\omega = \omega^\prime + \omega^{\prime\prime}$.
We set the $*$--operator on $\Omega^1(M,\Sl)$ to
$*\omega = - i \omega^\prime + i \omega^{\prime \prime}$.
Fix $\epsilon \in \su$ and let
$\mathbb{T} = \mathrm{stab}(\epsilon)$ be the stabilizer
of $\epsilon$ under the adjoint action of $\SU$ on $\su$.
We shall view the 2-sphere $\mathbb{S}^2$ as the quotient
$\mathbb{S}^2 \cong \SU / \mathbb{T}$.

We denote by $\langle \cdot \, , \cdot \rangle$
the bilinear extension of the Ad--invariant inner
product of $\su$ to $\su^{\mbox{\tiny{$\C$}}} = \Sl$ such that
$\langle \epsilon,\,\epsilon \rangle = 1$.
The double cover of the isometry group
$\mathrm{SO}(4)$ is $\SU \times \SU$ via the action
$X \mapsto FXG^{-1}$.
Writing $df = df^\prime + df^{\prime\prime}$ for the
differential of $f$, recall that an immersion
$f:\R^2 \to \SU$ is conformal if and only if
\begin{equation} \label{eq:conformal}
  \langle df^\prime ,\, df^\prime \rangle = 0.
\end{equation}
If $f:\R^2 \to \mathbb{S}^3$ is a conformal immersion and
$\omega = f^{-1}df$, then it can be shown (see e.g \cite{SKKR})
that the mean curvature function $H$ of $f$ is given by
\begin{equation} \label{eq:H_S3}
  2\,d*\omega = H \,[ \omega \wedge \omega ].
\end{equation}
Suppose $f:\R^2 \to \SU$ is a conformal immersion with
non-zero constant mean curvature $H$.
Then $2d*\omega = H\,[\,\omega \wedge \omega \,]$ and
$2d\omega + [\,\omega \wedge \omega \,]=0$ combined give
$d*\omega +H^{-1}d*\omega =0$, or alternatively
\begin{equation} \label{eq:w'}
  (1-iH^{-1})d\omega' + (1+iH^{-1})d\omega'' = 0.
\end{equation}
Inserting
$2d\omega'' = -2d\omega' - [\,\omega \wedge \omega \,]$
respectively
$2d\omega' = -2d\omega'' - [\,\omega \wedge \omega \,]$
into \eqref{eq:w'} gives
$4d\omega' = (iH-1)[\,\omega \wedge \omega\,]$ and
$4d\omega'' = -(1+iH)[\,\omega \wedge \omega\,]$.
Then
\begin{equation*}
  \alpha_\lambda = \tfrac{1}{2}(1-\lambda^{-1})(1+iH)\,\omega'
  + \tfrac{1}{2}(1-\lambda)(1-iH)\,\omega''
\end{equation*}
satisfies $2d\alpha_\lambda + [\,\alpha_\lambda \wedge
\alpha_\lambda \,] = 0$ for all $\lambda \in \C^*$, and thus there
exists a corresponding \emph{extended frame} $F_\lambda : \R^2
\times \mathbb{S}^1 \to \SU$ with $dF_\lambda =
F_\lambda\,\alpha_\lambda$ and $F_\lambda(0) = \mathbbm{1}$.
Conversely, we recall the following version of a result by Bobenko
\cite{Bob:cmc}. Our formulas are slightly different to those of
Bobenko \cite{Bob:tor,Bob:cmc,Bob:2x2}, but the modifications in the
proof are obvious.
\begin{theorem} \label{th:sinh}
Let $u:\R^2 \to \R$ be a smooth function and define
\begin{equation} \label{eq:general_alpha}
  \alpha_\lambda = \frac{1}{2}\,\begin{pmatrix}
  u_z\,dz-u_{\bar{z}}\,d\bar{z} &
  i\,\lambda^{-1}e^u\,dz + i\,e^{-u}\,d\bar{z}\\
  i\,e^{-u}\,dz + i\,\lambda\,e^{u}\,d\bar{z}&
  -u_z\,dz+u_{\bar{z}}\,d\bar{z}
  \end{pmatrix}\,.
\end{equation}
Then $2\,d\alpha_\lambda +
[\,\alpha_\lambda \wedge \alpha_\lambda\,] = 0$
if and only if $u$ is a solution of the sinh-Gordon equation
\begin{equation}\label{eq:sinh-Gordon}
  \partial \bar{\partial}u + \tfrac{1}{2}\,\sinh (2u) = 0.
\end{equation}
Furthermore, for any solution $u$ of the sinh-Gordon equation
and corresponding extended frame
$F_\lambda:\R^2 \times \mathbb{S}^1 \to \SU$, and
$\lambda_0,\,\lambda_1 \in \mathbb{S}^1,\,
\lambda_0 \neq \lambda_1$, the map $f:\R^2 \to \SU$
defined by
\begin{equation}\label{eq:Sym_S3}
  f = F_{\lambda_1} F_{\lambda_0}^{-1}
\end{equation}
is a conformal immersion with constant mean curvature
\begin{equation} \label{eq:H_mu}
  H = i \,\frac{\lambda_0 + \lambda_1}{\lambda_0 -\lambda_1},
\end{equation}
conformal factor
\begin{equation} \label{eq:v^2}
  v^2 = \frac{e^{2u}}{H^2 + 1}\,,
\end{equation}
and Hopf differential $Q\,dz^2$ with
\begin{equation} \label{eq:Q}
  Q = \frac{i}{4}\,(\lambda_1^{-1}-\lambda_0^{-1}).
\end{equation}
\end{theorem}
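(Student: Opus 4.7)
The theorem has two parts: the equivalence of the zero-curvature equation with the sinh-Gordon equation, and the assertion that the Sym--Bobenko formula \eqref{eq:Sym_S3} produces a conformal immersion with the prescribed $H$, $v^2$, and $Q$.

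For the first part, I would perform a direct $\Sl$-matrix computation. Writing $\alpha_\lambda = A\,dz + B\,d\bar{z}$ with $A, B$ read off from \eqref{eq:general_alpha}, the Maurer--Cartan condition $2\,d\alpha_\lambda + [\alpha_\lambda \wedge \alpha_\lambda] = 0$ collapses to the single $\Sl$-valued equation $B_z - A_{\bar{z}} + [A,B] = 0$. A short calculation shows that the off-diagonal entries vanish identically, with the $\lambda^{\pm 1}$-contributions to $B_z - A_{\bar{z}}$ cancelling exactly against those from $[A,B]$; the two diagonal entries both reduce to the scalar equation $u_{z\bar{z}} + \tfrac12\sinh(2u) = 0$, which is precisely \eqref{eq:sinh-Gordon}.

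For the Sym--Bobenko part, the identity $d(F_{\lambda_0}^{-1}) = -\alpha_{\lambda_0} F_{\lambda_0}^{-1}$ and the product rule give
\begin{equation*}
  \omega := f^{-1}\,df = F_{\lambda_0}\,(\alpha_{\lambda_1} - \alpha_{\lambda_0})\,F_{\lambda_0}^{-1}.
\end{equation*}
By Ad-invariance of $\langle \cdot, \cdot \rangle$ and $[\cdot,\cdot]$, every quantity below may be computed on $\beta := \alpha_{\lambda_1} - \alpha_{\lambda_0}$ rather than on $\omega$ itself. From \eqref{eq:general_alpha},
\begin{equation*}
  \beta' = \tfrac{i}{2}(\lambda_1^{-1} - \lambda_0^{-1})\,e^u \begin{pmatrix} 0 & 1 \\ 0 & 0 \end{pmatrix} dz, \qquad
  \beta'' = \tfrac{i}{2}(\lambda_1 - \lambda_0)\,e^u \begin{pmatrix} 0 & 0 \\ 1 & 0 \end{pmatrix} d\bar{z},
\end{equation*}
so $\beta'$ is nilpotent, $\langle \beta', \beta' \rangle = 0$, and conformality \eqref{eq:conformal} follows at once. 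The conformal factor $v^2$ is read off from $\langle \beta', \beta'' \rangle$, and the elementary identity $(\lambda_1^{-1}-\lambda_0^{-1})(\lambda_1-\lambda_0) = 4/(H^2+1)$ with $H$ as in \eqref{eq:H_mu} then yields \eqref{eq:v^2}.

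For the mean curvature, $d{\ast}\omega$ and $[\omega \wedge \omega]$ are likewise reduced by Ad-invariance to expressions in $\beta$, $d\beta$, and $[\beta \wedge \beta]$, and the sinh-Gordon equation is used to eliminate the $u_{z\bar{z}}$ terms, leaving the ratio in \eqref{eq:H_S3} equal to the constant $i(\lambda_0+\lambda_1)/(\lambda_0-\lambda_1)$. The Hopf differential is the $(2,0)$-part of the second fundamental form, which in this Ad-invariant setting amounts to the $dz^2$-coefficient of a suitable pairing of $\partial\beta$ with the unit normal; both are determined by the extended frame, and the calculation returns $Q = \tfrac{i}{4}(\lambda_1^{-1} - \lambda_0^{-1})$. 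The principal obstacle throughout is not conceptual but careful bookkeeping: the precise form of $\langle \cdot, \cdot \rangle$ on $\Sl$ fixed by $\langle \epsilon, \epsilon \rangle = 1$, the sign convention for $\ast$, and the placement of $\lambda^{\pm 1}$ in \eqref{eq:general_alpha} must all be tracked consistently so that the prefactors in \eqref{eq:H_mu}--\eqref{eq:Q} emerge exactly as stated. Once these conventions are fixed, each assertion reduces to one of the elementary computations above.
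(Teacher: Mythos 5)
The paper itself offers no proof of this theorem: it is stated as a version of Bobenko's result with the remark that ``the modifications in the proof are obvious,'' and your outline is precisely the standard direct verification that this citation defers to, so the approach matches. Your computations for the zero-curvature part, for $\omega = f^{-1}df = F_{\lambda_0}(\alpha_{\lambda_1}-\alpha_{\lambda_0})F_{\lambda_0}^{-1}$, for conformality via nilpotency of $\beta'$, and for $v^2$ via the identity $(\lambda_1^{-1}-\lambda_0^{-1})(\lambda_1-\lambda_0)=4/(H^2+1)$ are all correct. One small caution for the mean curvature step: while $[\omega\wedge\omega]=F_{\lambda_0}[\beta\wedge\beta]F_{\lambda_0}^{-1}$ by Ad-invariance, the exterior derivative does not commute with the conjugation, so $d{*}\omega = F_{\lambda_0}\left(d{*}\beta + [\alpha_{\lambda_0}\wedge{*}\beta]\right)F_{\lambda_0}^{-1}$; the extra commutator term with $\alpha_{\lambda_0}$ must be kept (it does not vanish) for \eqref{eq:H_S3} to yield the constant in \eqref{eq:H_mu}.
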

\begin{corollary} Useful formulae obtained from equations
\eqref{eq:H_mu}, \eqref{eq:v^2} and \eqref{eq:Q} are
\begin{equation} \label{eq:useful}
  v^2 = 4e^{2u} Q\,\overline{Q}\,, \qquad
  4 Q\,\overline{Q} (H^2 +1) = 1.
\end{equation}
The constant mean curvature surface $f$ in $\mathbb{S}^3$
corresponding to a solution $u$ of the sinh-Gordon equation
satisfies the following equations with respect to the
moving frame $(f,\,\partial f,\,\bar{\partial}f,\,N)$:
\begin{equation} \begin{split}\label{eq:Frenet}
    \partial N &= -H\,\partial f + 2\,v^{-2}Q\,\bar{\partial}f \\
    \partial \partial f &= 2u_z\,\partial f - Q\,N \\
    2\,\partial \bar{\partial} f &= -v^2 \,f + v^2 H\,N
    \end{split}
\end{equation}
\end{corollary}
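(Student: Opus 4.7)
For the useful formulae \eqref{eq:useful}, I would argue purely algebraically from \eqref{eq:H_mu}, \eqref{eq:v^2}, \eqref{eq:Q}. Since $\lambda_0,\lambda_1 \in \mathbb{S}^1$ one has $\overline{\lambda_j} = \lambda_j^{-1}$ and so $\overline{Q} = \tfrac{i}{4}(\lambda_0-\lambda_1)$; a short computation then yields
\[
  H^2 + 1 = \frac{-4\lambda_0\lambda_1}{(\lambda_0-\lambda_1)^2}, \qquad
  4\,Q\,\overline{Q} = -\frac{(\lambda_0-\lambda_1)^2}{4\,\lambda_0\lambda_1},
\]
whose product is $1$; the identity $v^2 = 4e^{2u}Q\overline{Q}$ then drops out from \eqref{eq:v^2}.

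For the frame equations \eqref{eq:Frenet}, my plan is to differentiate the Sym formula $f = F_{\lambda_1} F_{\lambda_0}^{-1}$ directly, using $dF_\lambda = F_\lambda \alpha_\lambda$ and $d(F_{\lambda_0}^{-1}) = -\alpha_{\lambda_0} F_{\lambda_0}^{-1}$ to get $df = F_{\lambda_1}(\alpha_{\lambda_1} - \alpha_{\lambda_0})F_{\lambda_0}^{-1}$. By \eqref{eq:general_alpha} the difference $\alpha_{\lambda_1}-\alpha_{\lambda_0}$ is strictly off-diagonal, and using the identities $\lambda_0^{-1}-\lambda_1^{-1} = 4iQ$ and $\lambda_1-\lambda_0 = 4i\overline{Q}$ I would read off its $(1,0)$ and $(0,1)$ parts as
\[
  \partial f = 2Q e^u\,F_{\lambda_1}\,E_{12}\,F_{\lambda_0}^{-1}, \qquad
  \bar\partial f = -2\overline{Q} e^u\,F_{\lambda_1}\,E_{21}\,F_{\lambda_0}^{-1},
\]
where $E_{ij}$ denotes the elementary $2\times 2$ matrix. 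The natural candidate for the oriented unit normal is $N := F_{\lambda_1}\,J\,F_{\lambda_0}^{-1}$ with $J := \mathrm{diag}(i,-i)$, since $J$ is perpendicular to the off-diagonals and to the identity in the Frobenius inner product; short trace computations should confirm $|N|=1$ and $\langle N, f\rangle = \langle N,\partial f\rangle = \langle N,\bar\partial f\rangle = 0$.

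Each identity in \eqref{eq:Frenet} I would then obtain by applying $\partial$ and the product rule to the corresponding expression. Every calculation reduces to a single $2\times 2$ matrix identity of the form $F_{\lambda_1}(\,\cdot\,)F_{\lambda_0}^{-1}$, where the inner matrix decomposes along the basis $\{I, J, E_{12}, E_{21}\}$, and under the sandwich this basis corresponds to the moving frame $(f, N, \partial f, \bar\partial f)$. For $\partial^2 f$ I expect the inner matrix to come out as $4Q u_z e^u\,E_{12} - Q\,J$, which gives $\partial^2 f = 2u_z\,\partial f - Q\,N$ at once. For $2\partial\bar\partial f$ the inner matrix should come out purely diagonal; decomposing it along $I$ and $J$ and invoking the further identity $\lambda_0^{-1}+\lambda_1^{-1} = -4HQ$ (a consequence of $\overline{H}=H$, \eqref{eq:H_mu} and \eqref{eq:Q}) together with the first part of the corollary should produce $-v^2 f + v^2 H\,N$. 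The formula for $\partial N$ is derived in the same way, with the coefficient of $\bar\partial f$ simplified via \eqref{eq:useful}. The only real obstacle throughout will be the bookkeeping of signs and powers of $\lambda_j$ and the re-expression of the resulting scalars in terms of $H$, $Q$, $\overline{Q}$ and $v^2$; the sinh-Gordon equation itself is never invoked explicitly, being already packaged into the existence of the extended frame $F_\lambda$ by Theorem~\ref{th:sinh}.
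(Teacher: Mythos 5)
Your proposal is correct: the paper states this corollary without proof, and your computation supplies exactly the argument the authors evidently have in mind. The scalar identities follow from the algebra you indicate (I checked $H^2+1=-4\lambda_0\lambda_1/(\lambda_0-\lambda_1)^2$ and $4Q\overline{Q}=-(\lambda_0-\lambda_1)^2/(4\lambda_0\lambda_1)$), and differentiating the Sym formula with $\partial f=2Qe^u F_{\lambda_1}E_{12}F_{\lambda_0}^{-1}$, $\bar\partial f=-2\overline{Q}e^uF_{\lambda_1}E_{21}F_{\lambda_0}^{-1}$ and $N=F_{\lambda_1}\,\mathrm{diag}(i,-i)\,F_{\lambda_0}^{-1}$ (which is consistent with the role of $\epsilon$ in the proof of Proposition 2.2) yields all three frame equations; in particular the inner matrices $4Qu_ze^uE_{12}-Q\,\mathrm{diag}(i,-i)$ for $\partial^2f$ and $-i\overline{Q}e^{2u}\,\mathrm{diag}(\lambda_1^{-1},-\lambda_0^{-1})$ for $\partial\bar\partial f$ come out as you predict, and the identities $\lambda_0^{-1}+\lambda_1^{-1}=-4HQ$ and $v^2=4Q\overline{Q}e^{2u}$ close the computation.
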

\subsection{Spectral curve}
Let $f:\R^2 \to \SU$ be a conformal {\sc{cmc}} immersion with
corresponding extended frame $F_\lambda$. For a translation $\tau:
\R^2 \to \R^2$ we write $\tau^* f = f \circ \tau$, and define the
monodromy of $F_\lambda$ with respect to $\tau$ as
\begin{equation*} 
  M(\tau,\,\lambda) = \tau^*F_\lambda \, F^{-1}_\lambda\,.
\end{equation*}
Now suppose we have a conformal {\sc{cmc}} immersion of a torus
$f:\R^2 / \Gamma \to \SU$, with lattice
\begin{equation*} 
  \Gamma = \omega_1\,\Z \,\oplus \,\omega_2\,\Z\,,
\end{equation*}
and corresponding extended frame $F_\lambda$. If
$M_1(\lambda),\,M_2(\lambda)$ are the monodromies of $F_\lambda$
with respect to $\omega_1$ and $\omega_2$, and $\mu_1,\,\mu_2$ the
corresponding eigenvalues, then the \emph{spectral curve} of the
torus is
\begin{equation*}
  \Sigma_f = \{ (\lambda,\,\mu_1,\,\mu_2) :
  \det(\mu_1\,\mathbbm{1} - M_1(\lambda)) =
  \det(\mu_2\,\mathbbm{1} - M_2(\lambda)) = 0 \,\}.
\end{equation*}

We recall the description of {\sc{cmc}} tori in terms of spectral
curves:

Let $Y$ be a hyperelliptic Riemann surface with branch points over
$\lambda = 0 \,(y^+)$ and $\lambda = \infty \,(y^-)$. Then $Y$ is
the spectral curve of an immersed {\sc{cmc}} torus in the three
sphere if and only if the following four conditions hold:
\begin{enumerate}
\item Besides the hyperelliptic involution $\sigma$, the surface
$Y$ has two further anti-holomorphic involutions $\eta$ and $\varrho
= \eta \circ \sigma = \sigma \circ \eta$, such that $\eta$ has no
fixed points and $\eta(y^+) = y^-$.
\item There exist two non-zero holomorphic functions
$\mu_1,\,\mu_2$ on $Y\setminus\{y^+,\,y^-\}$ such that for $i=1,\,2$
\begin{equation*}
  \sigma^*\mu_i = \mu_i^{-1},\,\eta^*\bar{\mu}_i = \mu_i,\,
  \varrho^*\bar{\mu}_i = \mu_i^{-1}.
\end{equation*}
\item The forms $d \ln \mu_i$ are meromorphic differentials
of the second kind with double poles at $y^\pm$. The singular parts
at $y^+$ respectively $y^-$ of these two differentials are linearly
independent.
\item There are four fix points $y_1,\,y_2 =
\sigma(y_1),\,y_3,\,y_4 = \sigma(y_3)$ of $\varrho$, such that the
functions $\mu_1$ and $\mu_2$ are either $1$ or $-1$ there.
\end{enumerate}
%
%
We shall describe spectral curves of {\sc{cmc}} tori in
$\mathbb{S}^3$ via hyperelliptic surfaces of the form
\begin{equation*}
  \nu^2 = \left\{ \begin{array}{ll} \lambda\,a(\lambda)
      &\mbox{ if $g$ is even,} \\ a(\lambda)
      &\mbox{ if $g$ is odd.}\end{array} \right.
\end{equation*}
Here $\lambda:Y \to \C\mathbb{P}^1$ is chosen so that $y^\pm$
correspond to $\lambda=0$ respectively $\lambda=\infty$, and
\begin{equation*}
  \sigma^*\lambda = \lambda,\,
  \eta^*\bar{\lambda} = \lambda^{-1},\,
  \varrho^*\bar{\lambda} = \lambda^{-1},
\end{equation*}
and
\begin{equation*}
  a(\lambda) = \prod_{i=1}^g (\lambda - \alpha_i)
  (\lambda^{-1} - \bar{\alpha}_i)
\end{equation*}
with pair-wise different branch points $\alpha_1,\,\ldots,\,\alpha_g
\in \{ \lambda \in \C : |\lambda| <1\}$. Hence $\eta^*\bar{a} = a$
and $\varrho^*\bar{a} = a$. For $|\lambda|=1$ we have that
$a(\lambda) >0$, and since $\eta$ has no fix points, we have
\begin{equation*}
  \left\{ \begin{array}{lllll}
    &\eta^*\bar{\nu} = -\lambda^{-1}\nu,
    &\varrho^*\bar{\nu} = \lambda^{-1}\nu,
    &\sigma^*\nu = -\nu
    &\mbox{ if $g$ is even,} \\
      &\eta^*\bar{\nu} = -\nu,
    &\varrho^*\bar{\nu} = \nu,
    &\sigma^*\nu = -\nu
    &\mbox{ if $g$ is odd.} \end{array} \right.
\end{equation*}
Up to now, the parameter $\lambda$ is only determined up to a
rotation. Pick $\lambda_0,\,\lambda_1 \in \mathbb{S}^1,\, \lambda_1
\neq 1$, and take the unique parameter $\lambda$ for which the
points $y_1$ and $y_2 = \sigma(y_1)$ correspond to the the two
points over $\lambda =\lambda_0$. Then the points $y_3$ and $y_4 =
\sigma(y_3)$ correspond to the two points over $\lambda =
\lambda_1$.

%
\section{Jacobi fields}
An infinitesimal deformation of a {\sc{cmc}} surface $f$ by
{\sc{cmc}} surfaces is given by a normal vector field $\dot{f} =
\omega\,N$, where the function $\omega:\mathbb{R}^2 \to \mathbb{R}$
has to be a solution of a Jacobi equation. For such a deformation in
Euclidean 3-space, it turns out that $\omega$ and the infinitesimal
change of the conformal factor solve the same homogeneous Jacobi
equation. We shall see that this is not the case for {\sc{cmc}}
surfaces in the 3-sphere. The reason for this is that the mean
curvature changes throughout a deformation in a non-trivial way,
introducing an inhomogeneity in the Jacobi equation for $\omega$.

Assume we are given  a smooth one-parameter family $f_t:\R^2 \to
\SU,\,t \in (-\epsilon,\,\epsilon)$ of {\sc{cmc}} surfaces of finite
type, with appropriate solutions $u_t$ of the sinh-Gordon equation,
mean curvatures $H_t$, and Hopf differentials $Q_t\,dz^2$, and
normals $N_t$. Denoting differentiation with respect to $t$ by a
dot, and omitting the subscript $t$ when $t=0$, we then have
\begin{equation} \label{eq:Jacobifield}
  \dot{f} = \tau\,\partial f +
  \sigma\,\bar{\partial}f + \omega\,N
\end{equation}
with smooth functions $\tau,\,\sigma :\C  \to \C$ such that
$\bar{\tau} = \sigma$, and a smooth funtion $\omega:\C \to \R$.

The following is the analogue of Proposition 2.1 in Pinkall
and Sterling \cite{PinS}.
\begin{proposition} \label{th:PinS1}
Every Jacobi field $\omega\,N$ along $f$ can be supplemented
by a tangential component
$\tau\,\partial f + \sigma\,\bar{\partial}f$
to yield a parametric Jacobi field.
Further, if \eqref{eq:Jacobifield} is a parametric Jacobi field,
then $\omega$ solves the inhomogeneous Jacobi equation
\begin{equation} \label{eq:inhom_jacobi}
  \bar{\partial}\partial\omega  + \cosh(2u)\,\omega =
  \frac{\dot{H}\,e^{2u}}{2(H^2+1)}\,,
\end{equation}
while $\dot{u}$ solves the homogeneous Jacobi equation
\begin{equation} \label{eq:dot_u}
  \bar{\partial}\partial\dot{u} + \cosh(2u)\,\dot{u}\, = 0.
\end{equation}
\end{proposition}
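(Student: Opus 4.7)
Part (iii) is immediate. Since by Theorem~\ref{th:sinh} each $u_t$ satisfies \eqref{eq:sinh-Gordon}, differentiating $\partial\bar\partial u_t+\tfrac12\sinh(2u_t)=0$ at $t=0$ and using $\tfrac{d}{dt}\sinh(2u_t)|_{t=0}=2\cosh(2u)\dot u$ yields \eqref{eq:dot_u}. For part (i), note that the {\sc cmc} deformation producing the normal field $\omega\,N$ comes from a family of maps $\tilde f_t:\R^2\to\SU$, not just of unparametrized surfaces. Applying Theorem~\ref{th:sinh} to each $\tilde f_t$ produces a $t$-dependent reparametrization $\varphi_t:\R^2\to\R^2$, with $\varphi_0=\mathrm{id}$, such that $f_t:=\tilde f_t\circ\varphi_t$ admits an extended frame and an associated $u_t$ solving sinh-Gordon. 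Differentiating at $t=0$ gives $\dot f=\omega N+df\cdot\dot\varphi_0$, and the tangential term $df\cdot\dot\varphi_0=\tau\,\partial f+\sigma\,\bar\partial f$, with $\sigma=\bar\tau$ forced by the reality of $\dot\varphi_0$, is the required supplement.

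For part (ii) I would differentiate the third Frenet equation in \eqref{eq:Frenet}, $2\partial\bar\partial f=-v^2 f+v^2 H\,N$, with respect to $t$, substitute \eqref{eq:Jacobifield} for $\dot f$, and expand every application of $\partial$ and $\bar\partial$ on $f$ and $N$ using the three equations in \eqref{eq:Frenet} and their complex conjugates. The projections onto $f$, $\partial f$, $\bar\partial f$ determine $\dot v$ and impose the compatibility conditions on $\tau,\sigma$; differentiating \eqref{eq:v^2} gives $\dot v/v=\dot u-H\dot H/(H^2+1)$. The $N$-projection then reduces to
\begin{equation*}
  2\,\partial\bar\partial\omega+\tfrac12\,v^2(|A|^2+2)\,\omega=v^2\dot H,
\end{equation*}
where $|A|^2=2H^2+8\,Q\overline{Q}/v^4$ is the squared norm of the second fundamental form, readable directly from \eqref{eq:Frenet}. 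Finally the two identities in \eqref{eq:useful}, namely $v^2=e^{2u}/(H^2+1)$ and $4\,Q\overline{Q}(H^2+1)=1$, collapse the coefficient to $v^2(|A|^2+2)/2=2\cosh(2u)$ and turn the right-hand side $v^2\dot H$ into $\dot H\,e^{2u}/(H^2+1)$; dividing by~$2$ yields \eqref{eq:inhom_jacobi}.

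The main obstacle will be the bookkeeping in (ii): applying $\partial\bar\partial$ to each term in \eqref{eq:Jacobifield} via \eqref{eq:Frenet} produces cross-contributions in all four basis directions $f,\partial f,\bar\partial f,N$ that must be carefully collected before projecting, and the coefficient $\cosh(2u)$ only emerges after the $\mathbb{S}^3$-specific identities \eqref{eq:useful} are applied. The inhomogeneity $\dot H\,e^{2u}/(2(H^2+1))$ on the right of \eqref{eq:inhom_jacobi}, absent in the Euclidean case of \cite{PinS}, reflects that $H$ can vary under deformations in $\mathbb{S}^3$, which is precisely why $\omega$ and $\dot u$ no longer satisfy the same equation in this setting.
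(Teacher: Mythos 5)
Your handling of the two differential equations is essentially sound. For \eqref{eq:inhom_jacobi} you differentiate $2\,\partial\bar{\partial}f=-v^2 f+v^2H\,N$ in $t$ and project onto $N$, which is the paper's route; your intermediate identity $2\,\partial\bar{\partial}\omega+\tfrac12 v^2(|A|^2+2)\,\omega=v^2\dot{H}$ with $|A|^2=2H^2+8Q\overline{Q}/v^4$ does collapse to \eqref{eq:inhom_jacobi} via \eqref{eq:v^2} and \eqref{eq:useful}, exactly as you claim. For \eqref{eq:dot_u} you simply differentiate the sinh-Gordon equation in $t$; this is cleaner than the paper, which instead deduces \eqref{eq:dot_u} from the purely infinitesimal relations \eqref{eq:udot}, \eqref{eq:inhom_jacobi} and the $t$-derivative of $4Q\overline{Q}(H^2+1)=1$, a derivation that does not presuppose that the infinitesimal data integrate to an actual family of sinh-Gordon solutions.

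The genuine gap is in the first claim. A Jacobi field $\omega\,N$ is, by the paper's own convention, a normal field whose coefficient solves the Jacobi equation; it is an infinitesimal object and need not arise as $\tfrac{d}{dt}f_t\big|_{t=0}$ for any genuine one-parameter family of {\sc cmc} immersions. Your argument presupposes such a family $\tilde{f}_t$ together with a smoothly varying reparametrization $\varphi_t$ (which, incidentally, is not what Theorem~\ref{th:sinh} provides — that theorem goes from sinh-Gordon solutions to immersions, not conversely), so it assumes precisely what is to be proved. The actual content of the first claim is the solvability of an overdetermined first-order system for $\tau$ and $\sigma$: conformality forces $\partial\sigma=-2v^{-2}\omega Q$ and $\bar{\partial}\tau=-2v^{-2}\omega\overline{Q}$ as in \eqref{eq:sigma_z}, while the variation of the Hopf differential forces $\partial\tau=(\dot{Q}+\partial^2\omega-2\partial u\,\partial\omega)/(2Q)$ as in \eqref{eq:tau_z}, and analogously $\bar{\partial}\sigma$ as in \eqref{eq:sigma_zbar}. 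One must then verify that the mixed partials agree — that $\bar{\partial}$ applied to \eqref{eq:tau_z} equals $\partial$ applied to the expression for $\bar{\partial}\tau$ in \eqref{eq:sigma_z} — and it is exactly here that the hypothesis that $\omega$ solves the Jacobi equation enters. This compatibility check is the heart of the paper's proof of the supplementation statement and is missing from your proposal; without it (or some substitute for it) the first claim is not established.
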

\begin{proof}
Straightforward computations using \eqref{eq:Frenet} give
\begin{equation*} \begin{split}
    \partial \dot{f} &= -\frac{\sigma v^2}{2}\,f +
    (\partial \tau + 2\tau \partial u-\omega H)\,\partial f +
    (\partial \sigma + \frac{2\omega Q}{v^2})\,\bar{\partial}f +
    (\frac{\sigma v^2}{2}\,H - \tau Q + \partial \omega)\,N\,, \\
    \bar{\partial} \dot{f} &= -\frac{\tau v^2}{2}\,f +
    (\bar{\partial} \tau + \frac{2\omega \bar{Q}}{v^2})\,\partial f +
      (\bar{\partial} \sigma + 2\sigma \bar{\partial}u -
    \omega H)\,\bar{\partial}f +
    (\frac{\tau v^2}{2}\,H - \sigma \bar{Q} +
    \bar{\partial}\omega )\,N\,.
\end{split}
\end{equation*}
Conformality $\langle \partial f,\,\partial f \rangle =
\langle \bar{\partial}f,\,\bar{\partial}f \rangle = 0$
implies that
$\langle \partial \dot{f},\,\partial f \rangle =
\langle \bar{\partial} \dot{f},\,\bar{\partial}f \rangle = 0$,
and consequently for $\tau,\,\sigma$ to supplement $\omega N$
to a parametric Jacobi field they must satisfy
\begin{equation} \begin{split} \label{eq:sigma_z}
  \partial \sigma &= -2\,v^{-2}\omega\, Q\,, \\
  \bar{\partial} \tau &= -2\,v^{-2}\omega\,\overline{Q}\,.
\end{split}
\end{equation}
Hence the above expressions for $\partial\dot{f}$ and
$\bar{\partial}\dot{f}$ simplify to
\begin{equation} \begin{split} \label{eq:fdot}
    \partial \dot{f} &= -\frac{\sigma v^2}{2}\,f +
    (\partial \tau + 2\tau \partial u-\omega H)\,\partial f +
    (\frac{\sigma v^2}{2}\,H - \tau Q + \partial \omega)\,N\,, \\
    \bar{\partial} \dot{f} &= -\frac{\tau v^2}{2}\,f +
     (\bar{\partial} \sigma + 2\sigma \bar{\partial}u -
    \omega H)\,\bar{\partial}f +
    (\frac{\tau v^2}{2}\,H - \sigma \overline{Q} +
    \bar{\partial}\omega )\,N\,.
\end{split}
\end{equation}
Differentiating $v^2 = 2\langle \partial f,\,
\bar{\partial}f \rangle$ gives
\begin{equation*}
    v\dot{v} = \langle \partial \dot{f},\,
    \bar{\partial}f \rangle +
    \langle \partial f,\,\bar{\partial} \dot{f} \rangle =
    \tfrac{1}{2}v^2
    (\partial \tau + 2\tau \partial u + \bar{\partial} \sigma +
    2\sigma \bar{\partial}u - 2\omega H ),
\end{equation*}
and combining this with the derivative of
equation \eqref{eq:v^2} yields
\begin{equation} \label{eq:udot}
  \dot{u} =
  \frac{1}{2}\partial \tau + \tau \partial u +
  \frac{1}{2}\bar{\partial} \sigma +
    \sigma \bar{\partial}u - \omega H\, +
    \frac{H\dot{H}}{H^2 +1}.
\end{equation}
From \eqref{eq:Frenet} we have
$2\,\langle \bar{\partial}\partial f ,\,N \rangle = v^2\,H$,
and differentiating this gives on the one hand
\begin{equation*}
  \langle \bar{\partial}\partial\dot{f},\,N \rangle +
  \langle \bar{\partial}\partial f ,\,\dot{N} \rangle =
  v\dot{v}H + \tfrac{v^2}{2}\dot{H}.
\end{equation*}
Differentiating
$2\,\bar{\partial}\partial f  = -v^2\,f + v^2H\,N$
gives
\begin{equation*}
  2\,\bar{\partial}\partial\dot{f} = -2v\dot{v}\,f -
  v^2\,\dot{f} +
  2\,v\dot{v}H\,N + v^2\dot{H}\,N + v^2 H\,\dot{N}\,.
\end{equation*}
Hence $2\langle \bar{\partial}\partial\dot{f},\,N \rangle =
- v^2\omega + 2v\dot{v}H + v^2\dot{H}$, and consequently
\begin{equation*}
  2\,\langle \bar{\partial}\partial f ,\,\dot{N} \rangle
  = v^2 \omega.
\end{equation*}
On the other hand, differentiating the first equation of
\eqref{eq:fdot} with respect to $\bar{z}$, and then computing
$\langle \bar{\partial}\partial\dot{f},\,N \rangle$ gives
\begin{equation*} \begin{split}
  \langle \bar{\partial}\partial\dot{f},\,N \rangle =
  &\tfrac{1}{2}(\partial \tau + 2\tau \partial u-\omega H)\,
  v^2 H -
   \overline{Q}(\partial \sigma + 2\omega v^{-2}Q) \\
    &+ \tfrac{1}{2}v^2 H \bar{\partial} \sigma  +
    \sigma v\bar{\partial}v H
    -\bar{\partial} \tau \,Q + \bar{\partial}\partial\omega ,
    \end{split}
\end{equation*}
and equating this with the first expression we obtained
for $2\,\langle \bar{\partial}\partial\dot{f},\,N \rangle$,
we obtain \eqref{eq:inhom_jacobi}.

Equation \eqref{eq:dot_u}
is now proven by a direct calculation using
equations \eqref{eq:udot}, \eqref{eq:inhom_jacobi} and the
time derivative of the second equation in \eqref{eq:useful}.

It remains to prove that the differential equations
defining $\sigma,\,\tau$ are integrable. From
\begin{equation*} \begin{split}
  &\langle \dot{N},\,f \rangle = -\langle N,\,\dot{f} \rangle
  = - \omega\,, \\
  &\langle \dot{N},\,\partial f \rangle =
  -\langle N,\,\partial \dot{f} \rangle
  = \tau Q - \sigma \tfrac{v^2}{2}H - \partial \omega\,, \\
  &\langle \dot{N},\,\bar{\partial}f \rangle =
    -\langle N,\,\bar{\partial} \dot{f} \rangle
    = \sigma Q - \tau \tfrac{v^2}{2}H - \bar{\partial}\omega\,,
\end{split}
\end{equation*}
we compute
\begin{equation} \label{eq:dotN}
  \dot{N} = -\omega\,f + v^{-2}(
  2\sigma Q - \tau v^2 H - 2\bar{\partial}\omega )\,\partial f
  + v^{-2}(2\tau Q - \sigma v^2H -
  2\partial \omega)\,\bar{\partial}f\,.
\end{equation}
Differentiating $Q = -\langle \partial^2 f ,\,N \rangle$ we have
$\dot{Q} = -\langle \partial^2 \dot{f},\,N \rangle -
\langle \partial^2 f ,\,\dot{N} \rangle$,
and with \eqref{eq:dotN} learn that
\begin{equation*}
  \langle \partial^2 f ,\,\dot{N} \rangle =
  ( 2\tau Q - \sigma v^2H - 2\partial \omega)\,\partial u.
\end{equation*}
Differentiating the first equation of \eqref{eq:fdot}
with respect to $z$ and taking inner product against $N$ gives
\begin{equation*}
  \langle \partial^2 \dot{f},\,N \rangle = \omega QH -
  2Q\partial \tau-2Q\tau \partial u
  + \tfrac{1}{2}v^2 H\partial \sigma +
  \sigma v H \partial v  + \partial^2 \omega ,
\end{equation*}
and therefore $\dot{Q} = 2Q\partial \tau + 2\partial u \,\partial
\omega - \partial^2 \omega$. Solving this equation for $\partial
\tau$ gives
\begin{equation} \label{eq:tau_z}
    \partial \tau =
    \frac{\dot{Q}+\partial^2 \omega -2\partial u\partial \omega}
     {2Q}\,.
\end{equation}
Finally, a straightforward computation proves that
integrability for $\tau$ holds automatically, that is
differentiating $\bar{\partial} \tau$ in \eqref{eq:sigma_z}
with respect to $z$ gives the same as differentiating
$\partial \tau$ in \eqref{eq:tau_z} with respect to $\bar{z}$.
Also, the analogous statement then holds for $\sigma$, that is
differentiating
\begin{equation} \label{eq:sigma_zbar}
    \bar{\partial} \sigma =
    \frac{\dot{\overline{Q}} + \bar{\partial}^2 \omega -
      2\bar{\partial} u \bar{\partial} \omega}
     {2\overline{Q}}\,.
\end{equation}
with respect to $z$ gives the same as
differentiating $\partial \sigma$ in \eqref{eq:sigma_z}
with respect to $\bar{z}$. Hence the functions $\tau,\,\sigma$ that supplement
the Jacobi field are unique up to addition of complex constants.
We compute $\tau,\,\sigma$ explicitly in terms of Baker-Akhiezer
functions in \eqref{eq:tau&sigma}.
\end{proof}
%
%
The following is the analogue of Proposition 2.2 in Pinkall and
Sterling \cite{PinS}. We call a parametric Jacobi field a Killing
field, if it is induced by an infinitesimal isometry of
$\mathbb{S}^3$.
\begin{proposition} \label{th:PinS2}
A parametric Jacobi field is a Killing
field if and only if $\dot{u}=0$.
\end{proposition}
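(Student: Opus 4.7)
The plan is to prove both implications using the extended frame $F_\lambda$ of the deformed family.

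For the forward direction, suppose $\dot{f} = Af - fB$ with $A, B \in \su$. Then $f_t = e^{tA}\,f\,e^{-tB}$ is obtained from $f$ by the one-parameter subgroup of isometries of $\mathbb{S}^3\cong\SU$ generated by $(A, B)$ under the double cover $\SU \times \SU \to \mathrm{SO}(4)$. Since isometries preserve the induced metric, the mean curvature, and the Hopf differential in the unchanged conformal parameter $z$, we get $v_t = v$, $H_t = H$ and $Q_t = Q$ to first order in $t$. Substituting into \eqref{eq:v^2} gives $e^{2u_t} = v_t^2(H_t^2+1) = v^2(H^2+1) = e^{2u}$, so $u_t = u$ and $\dot{u} = 0$.

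For the reverse direction, the key observation is that the connection form $\alpha_\lambda$ in \eqref{eq:general_alpha} depends on $(z, \bar{z})$ only through $u$ and its first derivatives, so the hypothesis $\dot{u} = 0$ immediately yields $\dot{\alpha}_\lambda \equiv 0$. Differentiating the extended-frame equation $dF_\lambda = F_\lambda\,\alpha_\lambda$ in the deformation parameter $t$ then gives $d\dot{F}_\lambda = \dot{F}_\lambda\,\alpha_\lambda$, equivalently $d(\dot{F}_\lambda F_\lambda^{-1}) = 0$. Hence $A(\lambda) := \dot{F}_\lambda F_\lambda^{-1}$ is constant in $z$, and the reality condition $F_\lambda \in \SU$ for $\lambda \in \mathbb{S}^1$ forces $A(\lambda) \in \su$. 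Differentiating the Sym formula \eqref{eq:Sym_S3} in $t$ (holding $\lambda_0, \lambda_1$ fixed) then produces $\dot{f} = A(\lambda_1)\,f - f\,A(\lambda_0)$, the infinitesimal isometry of $\mathbb{S}^3$ generated by the pair $(A(\lambda_1), A(\lambda_0)) \in \su \oplus \su$.

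The step I expect to be the main obstacle is the reverse direction, specifically handling the variation of the Sym points $\lambda_0, \lambda_1$ and the normalization of $F_{\lambda,t}$ at the basepoint. If these are allowed to vary with $t$, then $\dot{f}$ acquires additional contributions of the form $\dot{\lambda}_i\,(\partial_\lambda F_\lambda\,F_\lambda^{-1})|_{\lambda_i}$ which are generally $z$-dependent and hence not of Killing form. One must therefore argue that under the hypothesis $\dot{u} = 0$, one can choose a normalization of the deformed extended frame in which $\lambda_0, \lambda_1$ remain fixed and $\dot F_\lambda F_\lambda^{-1}$ is a genuine constant in $z$, so that the above identification of $\dot{f}$ as an infinitesimal isometry is clean.
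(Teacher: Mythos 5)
Your forward direction is correct and matches the paper, which simply calls it trivial: an ambient isometry preserves $v$, $H$ and $Q$, hence $u$ by \eqref{eq:v^2}.

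The reverse direction, however, has a genuine gap, and it is exactly the one you flag at the end without closing: the behaviour of the Sym points. The problem is worse than a normalization nuisance. With the standard normalization $F_{\lambda,t}(0)=\mathbbm{1}$, the connection form \eqref{eq:general_alpha} depends only on $u$, so $\dot{u}=0$ gives $\dot{\alpha}_\lambda=0$ and hence $\dot{F}_\lambda\equiv 0$, i.e.\ your $A(\lambda)$ is not merely constant in $z$ but identically zero. Your formula then returns $\dot{f}=0$, whereas a general parametric Jacobi field with $\dot{u}=0$ is a nonzero Killing field. The missing content is therefore twofold: (a) the nonzero part of $\dot{f}$ must come from varying the frame normalization $F_{\lambda,t}\mapsto G(t,\lambda)F_{\lambda,t}$ and possibly from $\dot{\lambda}_0,\dot{\lambda}_1$, and (b) one must show that $\dot{u}=0$ forces $\dot{\lambda}_0=\dot{\lambda}_1=0$ (equivalently $\dot{H}=\dot{Q}=0$, using \eqref{eq:H_mu}, \eqref{eq:Q} and the constraint in \eqref{eq:useful}), since the $\dot{\lambda}_i\,(\partial_\lambda F_\lambda F_\lambda^{-1})|_{\lambda_i}$ terms are $z$-dependent and would destroy the Killing form. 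Neither step is carried out, and (b) in particular requires an argument from \eqref{eq:udot} and \eqref{eq:tau_z}; it is not automatic from the hypothesis.

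The paper avoids this entirely by never integrating the deformation into a family of frames. Given the parametric Jacobi field $(\omega,\tau,\sigma)$, it solves the pointwise linear system \eqref{eq:g} for $B_0=F_{\lambda_0}^{-1}g_0F_{\lambda_0}$ and $B_1=F_{\lambda_1}^{-1}g_1F_{\lambda_1}$, obtaining explicit expressions for $g_0\,$, $g_1$ as combinations of $\tau,\sigma,\omega$ and their first derivatives in the moving frame $(f,\partial f,\bar{\partial}f,N)$, and then verifies by direct computation — using \eqref{eq:Frenet}, \eqref{eq:sigma_z}, \eqref{eq:tau_z}, \eqref{eq:sigma_zbar} and \eqref{eq:udot} together with $\dot{u}=0$ — that $dg_0=dg_1=0$. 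If you want to keep your loop-group route, you must supply step (b) above and then identify the residual constant gauge freedom with the pair $(g_1,g_0)$; as written, the argument does not yet prove the implication.
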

\begin{proof}
The 'only if' part is trivial. A parametric Jacobi field \eqref{eq:Jacobifield}
is a Killing field, if and only if there exists elements
$g_0,g_1\in\su$, such that
\begin{equation} \label{eq:g}
  \dot{f} = g_1 f - f g_0 =
  F_{\lambda_1} \left( F_{\lambda_1}^{-1}g_1 F_{\lambda_1} -
   F_{\lambda_0}^{-1}g_0 F_{\lambda_0} \right) F_{\lambda_0}^{-1}.
\end{equation}
Setting  $B_0 = F_{\lambda_0}^{-1}g_0 F_{\lambda_0},\,
B_1 = F_{\lambda_1}^{-1}g_1 F_{\lambda_1}$, then equation
\eqref{eq:g} reads
\begin{equation*}
  F_{\lambda_1}^{-1}\dot{f} F_{\lambda_0} = B_1 - B_0  =
  \tau \,\left(\alpha'_{\lambda_1}-\alpha'_{\lambda_0}\right)
  + \sigma \,
  \left(\alpha''_{\lambda_1} -\alpha''_{\lambda_0}\right)
    + \omega\,\epsilon\,.
\end{equation*}
The derivatives together with equations \eqref{eq:fdot} yield
\begin{equation*} \begin{split}
    F_{\lambda_1}^{-1}\partial \dot{f} F_{\lambda_0} &=
    B_1 \,\left(\alpha'_{\lambda_1} -\alpha'_{\lambda_0}\right)
  - \left(\alpha'_{\lambda_1}-\alpha'_{\lambda_0}\right)\,B_0 \\
    &= -\tfrac{1}{2}\sigma v^2 \mathbbm{1} +
    (\partial \tau + 2\tau\partial u - \omega H)
    \left(\alpha'_{\lambda_1}-\alpha'_{\lambda_0}\right) +
      (\tfrac{1}{2}\sigma v^2 H - \tau Q + \partial \omega)
      \,\epsilon\,, \\
  F_{\lambda_1}^{-1}\bar{\partial}\dot{f} F_{\lambda_0} &=
  B_1 \,\left(\alpha''_{\lambda_1}-\alpha''_{\lambda_0}\right)
  - \left(\alpha'_{\lambda_1}-\alpha'_{\lambda_0}\right)\,B_0 \\
  &= -\tfrac{1}{2}\sigma v^2 \mathbbm{1} +
    (\bar\partial \sigma + 2\sigma \bar{\partial} u - \omega H)
    \left(\alpha''_{\lambda_1}-\alpha''_{\lambda_0}\right) +
      (\tfrac{1}{2}\tau v^2 H - \sigma \overline{Q} +
      \bar{\partial} \omega)
      \,\epsilon\,.
\end{split}
\end{equation*}
These equations can be solved and we obtain
%
\begin{equation*} \begin{split}
  B_0 &= \frac{1}{2i} \left(\partial\tau +
    2\tau\partial u - \omega(i+H) \right) \,\epsilon
    + v^{-2}(i\sigma\overline{Q}-\tfrac{1}{2}
    \tau v^2(1+iH) - i\bar{\partial}\omega)
    \,\left(\alpha'_{\lambda_1}-\alpha'_{\lambda_0}\right)\\
    &\qquad - v^{-2}(i\tau Q + \tfrac{1}{2}
    \sigma v^2(1-iH) -i \partial \omega)
    \,\left(\alpha''_{\lambda_1}-\alpha''_{\lambda_0}\right)\,,
    \end{split}
\end{equation*}
\begin{equation*} \begin{split}
  B_1 &= \tfrac{1}{2i}\left(
  \partial\tau + 2\tau\partial u + \omega(i-H) \right)
  \,\epsilon + v^{-2}(i\sigma\overline{Q}+\tfrac{1}{2}
    \tau v^2(1-iH) - i\bar{\partial}\omega)
    \,\left(\alpha'_{\lambda_1}-\alpha'_{\lambda_0}\right)
     \\
    &\qquad -v^{-2}(i\tau Q - \tfrac{1}{2}
    \sigma v^2(1+iH) -i \partial \omega)
    \,\left(\alpha''_{\lambda_1}-\alpha''_{\lambda_0}\right)\,.
        \end{split}
\end{equation*}
Consequently, setting
$g_j = F_{\lambda_j} B_j F^{-1}_{\lambda_j}$ for $j=0,\,1$,
we get
\begin{equation*} \begin{split}
  f\,g_0 &= \frac{1}{2i} \left(\partial\tau +
    2\tau\partial u - \omega(i+H) \right) \,N
    + v^{-2}(i\sigma\overline{Q}-\tfrac{1}{2}
    \tau v^2(1+iH) - i\bar{\partial}\omega)
    \,\partial f \\
    &\qquad - v^{-2}(i\tau Q + \tfrac{1}{2}
    \sigma v^2(1-iH) -i \partial \omega)
    \,\bar{\partial}f\,, \\
  g_1\,f &= \tfrac{1}{2i}\left(
  \partial\tau + 2\tau\partial u + \omega(i-H) \right)
  \,N + v^{-2}(i\sigma\overline{Q}+\tfrac{1}{2}
    \tau v^2(1-iH) - i\bar{\partial}\omega)
    \,\partial f \\
        &\qquad -v^{-2}(i\tau Q - \tfrac{1}{2}
    \sigma v^2(1+iH) -i \partial \omega)
    \,\bar{\partial}f.
\end{split}
\end{equation*}
It remains to prove that $\dot{u} = 0$ implies that $dg_1 = dg_2
=0$, or equivalently that both $d(fg_0) = df f^{-1} fg_0$ and $d(g_1
f) = g_1 f f^{-1}df$ hold. To this end it is useful to recall the
identities
\begin{equation*} \begin{split}
  \partial f \,f^{-1} \partial f = 0\,, &\qquad
  \bar{\partial}f\,f^{-1}\bar{\partial}f =0\,,\\
  \bar{\partial}f \,f^{-1} \partial f = -\tfrac{1}{2}v^2(f+iN)\,,
  &\qquad
  \partial f \,f^{-1} \bar{\partial}f = -\tfrac{1}{2}v^2(f-iN)\,,\\
  N f^{-1} \partial f = -\partial f f^{-1} N = i\partial f\,,
  &\qquad
  -N f^{-1} \bar{\partial} f =  \bar{\partial} f f^{-1} N =
  i \bar{\partial} f\,.
  \end{split}
\end{equation*}
The rest of the proof is now a straightforward computation.
\end{proof}
%

\section{The homogeneous Jacobi fields}
We next calculate a solution of the homogeneous Jacobi equation
\eqref{eq:inhom_jacobi}. Let $u:\R^2 \to \R$ be a solution of the
sinh-Gordon equation. Let $\psi = (\psi_1,\,\psi_2)^t$ be a solution
of
\begin{equation} \label{eq:psi} \begin{split}
  \partial \begin{pmatrix} \psi_1 \\ \psi_2 \end{pmatrix} &=
  \frac{1}{2}\,\begin{pmatrix}
  \partial u & ie^{-u} \\ i\lambda^{-1}e^u & -\partial u
  \end{pmatrix}
  \begin{pmatrix} \psi_1 \\ \psi_2 \end{pmatrix}\,, \\
  \bar{\partial} \begin{pmatrix} \psi_1 \\ \psi_2
  \end{pmatrix} &=
  \frac{1}{2}\,\begin{pmatrix}
  -\bar{\partial}u & i\lambda e^u \\ ie^{-u} &
  \bar{\partial}u \end{pmatrix}
  \begin{pmatrix} \psi_1 \\ \psi_2 \end{pmatrix}.
\end{split}
\end{equation}
Then $\varphi = (\varphi_1,\,\varphi_2)^t$ defined by
\begin{equation*}
  \begin{pmatrix} \varphi_1 \\ \varphi_2 \end{pmatrix} =
  \begin{pmatrix} 0 & 1 \\ -1 & 0 \end{pmatrix}
  \begin{pmatrix} \psi_1 \\ \psi_2 \end{pmatrix}
\end{equation*}
solves
\begin{equation} \label{eq:phi} \begin{split}
  \partial \begin{pmatrix} \varphi_1 \\ \varphi_2 \end{pmatrix} &=
  -\frac{1}{2}\,\begin{pmatrix}
  \partial u & i\lambda^{-1} e^u \\ ie^{-u} & -\partial u
  \end{pmatrix}
  \begin{pmatrix} \varphi_1 \\ \varphi_2 \end{pmatrix}\,, \\
  \bar{\partial} \begin{pmatrix} \varphi_1 \\ \varphi_2
  \end{pmatrix} &=
  -\frac{1}{2}\,\begin{pmatrix}
  -\bar{\partial}u & ie^{-u} \\ i\lambda e^u &
  \bar{\partial}u \end{pmatrix}
  \begin{pmatrix} \varphi_1 \\ \varphi_2 \end{pmatrix}\,.
\end{split}
\end{equation}
For every value of $\lambda\in\C^*$ there exists a two dimensional
space of such functions $\psi$ and $\varphi$. Assume $u$ is periodic
with period $\gamma$. Then we assume in addition to \eqref{eq:psi}
and \eqref{eq:phi} that these functions are eigenvalues of the
translations by the period $\gamma$. Hence we get for any pair
$(\lambda,\mu)$, such that $\mu\not=\pm 1$ is an eigenvalue of the
monodromy of $\psi$, a one dimensional space space of such functions
$\psi$. A normalization of this function extends to a unique
function $\psi$ on the spectral curve called Baker-Akhiezer
function. Since the monodromy of $\psi$ has determinant one, there
always exist another solution $\sigma^*\psi$ with inverse eigenvalue
of the monodromy. We shall assume that the translations by the
periods acts on $\varphi$ with inverse eigenvalues as on $\psi$.
This of course can be only achieved with $\varphi$ obtained from
another solution $\sigma^*\psi$ of \eqref{eq:psi}. So we may assume
that both $\psi_1\varphi_1$ and $\psi_2\varphi_2$ have trivial
multipliers with respect to $\gamma$.
\begin{proposition} \label{thm:omega_tau_sigma}
Let $\psi = (\psi_1,\,\psi_2)^t$ be a solution of \eqref{eq:psi} and
let $\varphi = (\varphi_1,\,\varphi_2)^t$ be a solution of
\eqref{eq:phi}. Define $\omega = \psi_1 \varphi_1 - \psi_2\varphi_2$
and  $\phi = \partial \omega - 2Q\tau$. Then

\noindent {\rm{(i)}} The function $y = \psi^t\varphi$ satisfies
$dy = 0$.

\noindent {\rm{(ii)}} The function $\omega$ is in the kernel of the
Jacobi operator, and can be supplemented to a parametric Jacobi
field with corresponding (up to complex constants)
\begin{equation} \label{eq:tau&sigma}
  \tau = \frac{i\psi_2\varphi_1}{e^u Q} \,, \quad
  \sigma = -\frac{i\psi_1\varphi_2}{e^u \overline{Q}}\,.
\end{equation}
\noindent {\rm{(iii)}}
For the parametric Jacobi field generated by $\omega,\,\tau$
and $\sigma$, the variation of the conformal factor is
\begin{equation} \label{eq:dotu}
  \dot{u} = \frac{(\lambda-\lambda_0)(\lambda-\lambda_1)}
      {\lambda\,(\lambda_0 - \lambda_1)}\,i\,\omega\,.
\end{equation}

\noindent {\rm{(iv)}} Then $\left(\partial\omega\right)^2 - \phi^2 =
  \lambda^{-1}\left( y^2-\omega^2 \right)$.

\noindent {\rm{(v)}} Further,
    $\partial \phi = 2\,\partial u\,\partial \omega$, and
    $\lambda^{-1}\omega = 2\phi\,\partial u - \partial^2\omega$.
\end{proposition}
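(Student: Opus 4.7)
The plan is to reduce every claim to direct bilinear manipulations of the products $\psi_i\varphi_j$, exploiting the near-symmetry between \eqref{eq:psi} and \eqref{eq:phi}. The computation that feeds all that follows is to expand $\partial\omega$ and $\bar{\partial}\omega$ using the ODEs; the $u_z$ and $u_{\bar{z}}$ contributions cancel pairwise, leaving
\begin{equation*}
\partial\omega = ie^{-u}\psi_2\varphi_1 - i\lambda^{-1}e^u\psi_1\varphi_2,\qquad \bar{\partial}\omega = i\lambda e^u\psi_2\varphi_1 - ie^{-u}\psi_1\varphi_2.
\end{equation*}
The analogous calculation for $y = \psi_1\varphi_1 + \psi_2\varphi_2$ produces total cancellation, giving (i).

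For (ii), I would apply $\bar{\partial}$ to the formula for $\partial\omega$. Using \eqref{eq:psi}--\eqref{eq:phi} one finds $\bar{\partial}(\psi_2\varphi_1) = u_{\bar{z}}\psi_2\varphi_1 + \tfrac{i}{2}e^{-u}\omega$ and $\bar{\partial}(\psi_1\varphi_2) = -u_{\bar{z}}\psi_1\varphi_2 - \tfrac{i\lambda}{2}e^u\omega$; the $u_{\bar{z}}$ terms cancel and what remains collapses to $-\tfrac{1}{2}(e^{2u}+e^{-2u})\omega$, so $\bar{\partial}\partial\omega + \cosh(2u)\omega = 0$. To verify that the prescribed $\tau,\sigma$ supplement $\omega N$ to a parametric Jacobi field, I would substitute them into the two integrability conditions \eqref{eq:sigma_z} from Proposition~\ref{th:PinS1}. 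Using the analogous identities for $\partial(\psi_1\varphi_2)$ and $\bar{\partial}(\psi_2\varphi_1)$ together with $v^2 = 4e^{2u}Q\overline{Q}$ from \eqref{eq:useful}, each condition reduces to a tautology.

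Parts (iv) and (v) become transparent from the observation that $2Q\tau = 2ie^{-u}\psi_2\varphi_1$, so
\begin{equation*}
\partial\omega - \phi = 2ie^{-u}\psi_2\varphi_1,\qquad \partial\omega + \phi = -2i\lambda^{-1}e^u\psi_1\varphi_2.
\end{equation*}
Multiplying yields $(\partial\omega)^2 - \phi^2 = 4\lambda^{-1}\psi_1\varphi_1\psi_2\varphi_2$, which equals $\lambda^{-1}(y^2-\omega^2)$ since $(y-\omega)(y+\omega) = 4\psi_1\varphi_1\psi_2\varphi_2$; this is (iv). For (v), I apply $\partial$ to each of the two factored expressions above via the Leibniz rule, reusing the $\partial(\psi_i\varphi_j)$ identities already computed; adding and subtracting the two resulting equations separates $\partial^2\omega$ and $\partial\phi$ and produces precisely $\lambda^{-1}\omega = 2\phi\,\partial u - \partial^2\omega$ and $\partial\phi = 2\partial u\,\partial\omega$.

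Finally (iii) follows from (ii) via \eqref{eq:udot}: the homogeneity of the Jacobi equation satisfied by $\omega$, combined with \eqref{eq:inhom_jacobi}, forces $\dot H = 0$, so \eqref{eq:udot} reduces to $\dot u = \tfrac{1}{2}\partial\tau + \tau\partial u + \tfrac{1}{2}\bar{\partial}\sigma + \sigma\bar{\partial}u - \omega H$. Direct differentiation of the explicit $\tau,\sigma$ collapses $\tfrac{1}{2}\partial\tau + \tau\partial u$ to $-\lambda^{-1}\omega/(4Q)$ and symmetrically $\tfrac{1}{2}\bar{\partial}\sigma + \sigma\bar{\partial}u$ to $-\lambda\omega/(4\overline{Q})$. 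Substituting $Q = \tfrac{i}{4}(\lambda_1^{-1}-\lambda_0^{-1})$, $\overline{Q} = -\tfrac{i}{4}(\lambda_1-\lambda_0)$ and $H = i(\lambda_0+\lambda_1)/(\lambda_0-\lambda_1)$ from Theorem~\ref{th:sinh} and collecting over the common denominator $\lambda(\lambda_0-\lambda_1)$, the numerator factors as $\lambda^2 - \lambda(\lambda_0+\lambda_1) + \lambda_0\lambda_1 = (\lambda-\lambda_0)(\lambda-\lambda_1)$, which is the claimed formula. The main obstacle is purely bookkeeping; the one conceptual step is extracting $\dot H = 0$, which expresses that the Jacobi field arising from a Baker--Akhiezer product is isospectral.
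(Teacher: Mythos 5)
Your proposal is correct and follows essentially the same route as the paper: all five parts are reduced to the same bilinear identities for $\partial(\psi_i\varphi_j)$ and $\bar{\partial}(\psi_i\varphi_j)$, the same factorization $\partial\omega\mp\phi=2Q\tau,\ 2\lambda^{-1}e^{2u}\overline{Q}\sigma$ for (iv)--(v), and the same evaluation of $\tfrac{1}{2}\partial\tau+\tau\partial u$ and $\tfrac{1}{2}\bar{\partial}\sigma+\sigma\bar{\partial}u$ combined with \eqref{eq:udot} for (iii). The only differences are organizational (you verify \eqref{eq:sigma_z} directly for the candidate $\tau,\sigma$ rather than deriving them by matching $\bar{\partial}$-derivatives of $\phi$, and you make explicit the step $\dot H=0$ that the paper leaves implicit), and your computations check out.
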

\begin{proof}
(i) is a straightforward computation using \eqref{eq:psi} and
\eqref{eq:phi}. To prove (ii), we compute
\begin{equation} \begin{split} \label{eq:omega_derivatives}
  \partial \omega = ie^{-u}\psi_2\varphi_1 -
  i\lambda^{-1}e^u \psi_1\varphi_2&\,,\\
   \bar{\partial} \partial \omega = -\omega\,\cosh(2u)&\,,\\
  \bar{\partial} \left( ie^{-u}\psi_2\varphi_1 +
  i\lambda^{-1}e^u \psi_1\varphi_2 \right) = \omega\,\sinh(2u)&\,.
  \end{split}
\end{equation}
Hence $\omega$ is in the kernel of the Jacobi operator.
From the $\bar{\partial}$-derivative
\eqref{eq:sigma_z} of $\tau$ and
\begin{align*}
  \bar{\partial}(-2Q\tau + \partial \omega) &=
  -2Q\bar{\partial}\tau
  + \bar{\partial}\partial\,\omega = \omega\,e^{-2u} -
  \omega\,\cosh(2u) \\
  &= -\omega\,\sinh(2u) =
  -\bar{\partial}\left(ie^{-u}\psi_2\varphi_1
  +i\lambda^{-1}e^u\psi_1\varphi_2 \right),
\end{align*}
we see that up to a complex constant
$-2Q\tau + \partial \omega = ie^{-u}\psi_2\varphi_1
  +i\lambda^{-1}e^u\psi_1\varphi_2$, and consequently
obtain the formula for $\tau$.
A similar computation yields the formula
for $\sigma$. To prove (iii) we compute
\begin{equation*}
   \tfrac{1}{2}\,\partial\tau + \tau \,\partial u
   = - \tfrac{\lambda^{-1}}{4Q} \,\omega\,, \quad
   \tfrac{1}{2}\,\bar{\partial}\sigma + \sigma \,\bar{\partial} u
   = - \tfrac{\lambda}{4\overline{Q}} \,\omega\,.
\end{equation*}
From \eqref{eq:udot} we have $\dot{u} =
  \tfrac{1}{2}\partial \tau + \tau \partial u +
  \frac{1}{2}\bar{\partial} \sigma +
    \sigma \bar{\partial}u - \omega H$.
A straightforward computation using \eqref{eq:H_mu} and \eqref{eq:Q}
now proves \eqref{eq:dotu}.

From equations \eqref{eq:psi} and \eqref{eq:phi} and
\eqref{eq:tau&sigma} we obtain
\begin{equation}\label{eq:domega&phi}\begin{split}
  \partial\omega &= ie^{-u}\varphi_1\psi_2 -
  i\lambda^{-1}e^{u}\varphi_2\psi_1
  = Q\,\tau + \lambda^{-1}e^{2u}\overline{Q}\,\sigma\,,\\
  \phi &= \partial\omega-2Q\,\tau = -Q\,\tau +
  \lambda^{-1}e^{2u}\overline{Q}\,\sigma.
\end{split}\end{equation}
Consequently $\tau\sigma$ is equal to
\begin{equation*}
  \tau\sigma=\frac{1}{2Q}\left(\partial\omega-\phi\right)
  \frac{\lambda}{2e^{2u}\overline{Q}}
  \left(\partial\omega+\phi\right)=\lambda v^{-2}
\left(\left(\partial\omega\right)^2-\phi^2\right).
\end{equation*}
On the other hand equations \eqref{eq:tau&sigma} yield
$$
\tau\,\sigma = \frac{e^{-2u}}{Q\overline{Q}}
\psi_1\psi_2\varphi_1\varphi_2 = \frac{e^{-2u}}{4Q\overline{Q}}
\left(y^2-\omega^2\right) = v^{-2}\left(y^2-\omega^2\right)\,,
$$
and equating these proves (iv). To prove (v), we compute $\partial
\phi = \partial^2\omega-2Q\partial\tau =
\partial^2 \omega
- \partial^2 \omega + 2\,\partial u\,\partial \omega =
2\,\partial u\,\partial \omega$, and
$(-\lambda^{-1})\omega = 2Q\,\partial \tau + 4Q\,\tau \partial u
= \partial^2\omega - \partial\phi + 2\partial\omega\,\partial u -
2\phi\,\partial u = \partial^2\omega - 2\phi\,\partial u$.
\end{proof}
%
%
\subsection{Involutions}
In the sequel we shall consider solutions $u$ of
\eqref{eq:sinh-Gordon} corresponding to a spectral curve with
involutions $\sigma$, $\eta$ and $\varrho$ and not necessarily
normalized Baker-Akhiezer function $\psi$. The maps $\psi =
(\psi_1,\,\psi_2)^t$ as in \eqref{eq:psi} and $\varphi =
(\varphi_1,\,\varphi_2)^t$ as in \eqref{eq:phi} transform as follows
under the involutions of the spectral curve:
\begin{align*}
  \begin{pmatrix} \varphi_1 \\ \varphi_2 \end{pmatrix}
  &\sim \begin{pmatrix} 0 & 1 \\ -1 & 0 \end{pmatrix}
  \sigma^* \begin{pmatrix} \psi_1 \\ \psi_2 \end{pmatrix}\,, \qquad
  \begin{pmatrix} \psi_1 \\ \psi_2 \end{pmatrix}
  \sim \begin{pmatrix} 0 & 1 \\ -1 & 0 \end{pmatrix}
  \sigma^* \begin{pmatrix} \varphi_1 \\ \varphi_2 \end{pmatrix}\,, \\
  \begin{pmatrix} \psi_1 \\ \psi_2 \end{pmatrix}
  &\sim \begin{pmatrix} 0 & 1 \\ -1 & 0 \end{pmatrix}
  \eta^* \begin{pmatrix} \overline{\psi_1} \\
    \overline{\psi_2} \end{pmatrix} \,,\qquad
  \begin{pmatrix} \varphi_1 \\ \varphi_2 \end{pmatrix}
  \sim \begin{pmatrix} 0 & 1 \\ -1 & 0 \end{pmatrix}
  \eta^* \begin{pmatrix} \overline{\varphi_1} \\
    \overline{\varphi_2} \end{pmatrix} \,,\\
  \begin{pmatrix} \varphi_1 \\ \varphi_2 \end{pmatrix}
  &\sim \begin{pmatrix} 0 & 1 \\ -1 & 0 \end{pmatrix}
  \varrho^* \begin{pmatrix} \overline{\psi_1} \\
    \overline{\psi_2} \end{pmatrix}\,.
\end{align*}
If $u$ is periodic with respect to at least one period $\gamma$,
then the eigenvalues of the corresponding monodromy define the
spectral curve with these involutions $\sigma$, $\eta$ and
$\varrho$. In this case the solutions $\psi$ of \eqref{eq:psi} and
$\varphi$ of \eqref{eq:phi} diagonalize the translations by the
periods with inverse eigenvalues $\mu$ and $\mu^{-1}$. Hence up to
normalization $\psi$ is the Baker-Akhiezer function.
\begin{proposition}
Let $u$ be a solution of \eqref{eq:sinh-Gordon} of finite type and
$\psi = (\psi_1,\,\psi_2)^t$ respectively $\varphi =
(\varphi_1,\,\varphi_2)^t$ be the corresponding solutions of
\eqref{eq:psi} and \eqref{eq:phi}. We define
\begin{equation} \label{eq:P}
  P = \frac{\psi \,\varphi^t}{\psi^t \,\varphi}\,.
\end{equation}
Then
\begin{equation} \begin{split} \label{eq:delu_res}
  \partial u &= \frac{i}{2}\,\mathrm{res}_{\lambda=0}
  \left( \lambda^{-1}d\lambda^{1/2} \,\mathrm{tr} \left(
  \begin{pmatrix} 1&0\\0&-1\end{pmatrix}\,P \right) \right)\,, \\
  \bar{\partial} u &= -\frac{i}{2}\,\mathrm{res}_{\lambda=\infty}
  \left( \lambda \,d\lambda^{-1/2}\, \mathrm{tr} \left(
  \begin{pmatrix} 1&0\\0&-1\end{pmatrix}\,P \right) \right)\,.
    \end{split}
\end{equation}
\end{proposition}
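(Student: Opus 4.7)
The plan is to work in the local parameter $\mu=\lambda^{1/2}$ near $\lambda=0$, in which $\lambda^{-1}d\lambda^{1/2}=\mu^{-2}d\mu$. A direct computation gives $\mathrm{tr}\bigl(\begin{pmatrix}1 & 0\\0 & -1\end{pmatrix}P\bigr) = (\psi_1\varphi_1-\psi_2\varphi_2)/(\psi^t\varphi) = \omega/y$, so the claimed residue identity at $\lambda=0$ reduces to showing that the coefficient of $\mu$ in the Laurent expansion of $\omega/y$ about $\mu=0$ equals $-2i\partial u$.

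Next I exploit the matrix properties of $P$. Differentiating $\psi^t\varphi = y$ and using \eqref{eq:psi}--\eqref{eq:phi} shows that $P$ is a rank-one projector ($P^2=P$, $\mathrm{tr}\,P = 1$) satisfying the Lax equation $\partial P = [U, P]$, where $U = U_0 + \mu^{-2}U_{-2}$ with leading singular term $U_{-2} = \frac{i}{2}e^u E_{21}$, using $E_{21} := \bigl(\begin{smallmatrix}0 & 0\\1 & 0\end{smallmatrix}\bigr)$. Standard Baker--Akhiezer asymptotics imply that $P$ has at most a simple pole at $\mu=0$; writing $P = \mu^{-1}P_{-1} + P_0 + \mu P_1 + \cdots$, the $\mu^{-3}$ and $\mu^{-2}$ orders of $\partial P = [U, P]$ force $[U_{-2}, P_{-1}] = [U_{-2}, P_0] = 0$. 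Since the commutant of $E_{21}$ consists of matrices of the form $aI + bE_{21}$, the trace conditions yield $P_{-1} = b\,E_{21}$ and $P_0 = \frac{1}{2}I + \beta E_{21}$ for some scalar functions $b,\beta$ of $(z,\bar z)$.

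The pole coefficient $b$ is then pinned down by combining the next-order relations. The $(1,1)$-entry of the $\mu^{-1}$ order of $\partial P = [U, P]$ gives $(P_1)_{12} = b\,e^{-2u}$, while the $\mu^0$ order of $P^2 = P$ produces $b\cdot(P_1)_{12} = \frac{1}{4}$; combining gives $b^2 e^{-2u} = \frac{1}{4}$, so $b = \frac{1}{2}e^u$ on the sheet of $\psi$. The $(2,1)$-entry of the same order of the Lax equation then reads
\[
\partial b + b\,u_z = \tfrac{i}{2}e^u\bigl((P_1)_{11} - (P_1)_{22}\bigr),
\]
and substituting $b=\frac{1}{2}e^u$ on the left yields $u_z\,e^u$, so $(P_1)_{11} - (P_1)_{22} = -2i\,\partial u$. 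This is precisely the sought coefficient of $\mu$ in $\omega/y$, and the first identity of \eqref{eq:delu_res} follows.

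The identity for $\bar\partial u$ at $\lambda=\infty$ is proved by the same argument with $\bar\partial P = [V, P]$ in place of $\partial P = [U, P]$: the relevant nilpotent is $V_+ = \frac{i}{2}e^u E_{12}$ in the local coordinate $\lambda^{-1/2}$, and the opposite sign in \eqref{eq:delu_res} reflects the orientation change at $\lambda=\infty$. The main delicate point is justifying the simple-pole structure of $P$ at $\mu=0$ via WKB analysis of the irregular singular point of \eqref{eq:psi}; once this is in hand, the remainder is a short algebraic computation from the matrix identities $P^2 = P$, $\mathrm{tr}\,P = 1$, and $\partial P = [U, P]$.
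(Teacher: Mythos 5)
Your argument is correct and reaches the expansion \eqref{eq:asymP} by a genuinely different route from the paper. The paper gauges $\psi,\varphi$ to $\widetilde\psi,\widetilde\varphi$ as in \eqref{eq:psitilde}, writes down their asymptotics $\exp(\pm\tfrac{iz}{2}\lambda^{-1/2})\bigl((1,1)^t - i\lambda^{1/2}(\partial u,\,-\partial u)^t + \mathrm{O}(\lambda)\bigr)$ by solving the gauged system order by order, then forms $\widetilde P$ and conjugates back, so the coefficient $-2i\,\partial u$ of $\lambda^{1/2}$ in $\mathrm{tr}\bigl(\bigl(\begin{smallmatrix}1&0\\0&-1\end{smallmatrix}\bigr)P\bigr)$ is read off directly. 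You instead exploit the algebraic structure of $P$ as a rank-one projector ($P^2=P$, $\mathrm{tr}\,P=1$) obeying the Lax equation $\partial P=[U,P]$, and determine the Laurent coefficients in $\mu=\lambda^{1/2}$ order by order; I checked the identities $(P_1)_{12}=b\,e^{-2u}$, $b\,(P_1)_{12}=\tfrac14$ and $\partial b + b\,\partial u = \tfrac{i}{2}e^u\bigl((P_1)_{11}-(P_1)_{22}\bigr)$, they are right, and they do force $(P_1)_{11}-(P_1)_{22}=-2i\,\partial u$, which is exactly what the residue picks out (the $\mu^{-1}$ and $\mu^{0}$ coefficients cannot contribute to the residue in any case). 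What your route buys is that almost no asymptotic analysis of $\psi$ itself is needed; what it costs is that two inputs are taken on faith. First, the pole order of $P$ at $\mu=0$ (at most simple), which you flag yourself: this is precisely the content of the explicit expansions the paper writes down, so you have deferred the one genuinely analytic step rather than eliminated it. Second, the sign $b=+\tfrac12 e^{u}$: your algebra only yields $b^2=\tfrac14 e^{2u}$, and the opposite sign would flip the sign of $\partial u$ in \eqref{eq:delu_res}; ``on the sheet of $\psi$'' must be tied to a concrete normalization (for instance the leading exponential $\exp(+\tfrac{iz}{2}\lambda^{-1/2})$ of $\widetilde\psi$), which is again what the paper's explicit expansion supplies. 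Neither point invalidates the argument, but both need to be pinned down for the proof to be complete; your treatment of $\lambda=\infty$ by symmetry is as brief as the paper's own.
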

\begin{proof}
Define $\widetilde{\psi}_1,\,\widetilde{\psi}_2$ and
$\widetilde{\varphi}_1,\,\widetilde{\varphi}_2$ by the gauge
\begin{equation} \begin{split} \label{eq:psitilde}
  \begin{pmatrix}\widetilde{\psi}_1 \\
    \widetilde{\psi}_2 \end{pmatrix} &=
  \begin{pmatrix} \lambda^{-1/2} e^{u/2}&0\\
    0&e^{-u/2}\end{pmatrix}
  \begin{pmatrix}\psi_1 \\ \psi_2 \end{pmatrix} \\
  \begin{pmatrix}\widetilde{\varphi}_1 \\
    \widetilde{\varphi}_2 \end{pmatrix} &=
  \begin{pmatrix} \lambda^{1/2} e^{-u/2}&0\\0& e^{u/2}\end{pmatrix}
  \begin{pmatrix}\varphi_1 \\ \varphi_2 \end{pmatrix} \\
\end{split}
\end{equation}
Then $\widetilde{\psi}_1,\,\widetilde{\psi}_2$ and
$\widetilde{\varphi}_1,\,\widetilde{\varphi}_2$ are solutions
of
\begin{align*} 
  \partial \begin{pmatrix} \widetilde{\psi}_1 \\
    \widetilde{\psi}_2 \end{pmatrix} &=
  \frac{1}{2}\,\begin{pmatrix}
  2\partial u & i\lambda^{-1/2} \\
  i\lambda^{-1/2} & -2\partial u
  \end{pmatrix}
  \begin{pmatrix} \widetilde{\psi}_1 \\
    \widetilde{\psi}_2 \end{pmatrix}\,, \quad
  \bar{\partial} \begin{pmatrix} \widetilde{\psi}_1 \\
    \widetilde{\psi}_2
  \end{pmatrix} =
  \frac{1}{2}\,\begin{pmatrix}
   0 & i\lambda^{1/2} e^{2u} \\
   i\lambda^{1/2} e^{-2u} & 0 \end{pmatrix}
  \begin{pmatrix} \widetilde{\psi}_1 \\
    \widetilde{\psi}_2 \end{pmatrix}\,, \\
  \partial \begin{pmatrix} \widetilde{\varphi}_1 \\
    \widetilde{\varphi}_2 \end{pmatrix} &=
  -\frac{1}{2}\,\begin{pmatrix}
  2\partial u & i\lambda^{-1/2}  \\
  i\lambda^{-1/2} & -2\partial u \end{pmatrix}
  \begin{pmatrix} \widetilde{\varphi}_1 \\
    \widetilde{\varphi}_2 \end{pmatrix}\,, \quad
  \bar{\partial} \begin{pmatrix} \widetilde{\varphi}_1 \\
    \widetilde{\varphi}_2
  \end{pmatrix} =
  -\frac{1}{2}\,\begin{pmatrix}
    0 & i\lambda^{1/2} e^{-2u} \\
    i\lambda^{1/2} e^{2u} & 0 \end{pmatrix}
  \begin{pmatrix} \widetilde{\varphi}_1 \\
    \widetilde{\varphi}_2 \end{pmatrix}\,.
\end{align*}
Hence the asymptotic expansions at $\lambda = 0$ are
\begin{equation*} \begin{split} 
  \begin{pmatrix} \widetilde{\psi}_1 \\
    \widetilde{\psi}_2 \end{pmatrix} &=
  \exp\left(\tfrac{iz}{2}\lambda^{-1/2}\right) \left(
    \begin{pmatrix}1\\1\end{pmatrix} -
      i \lambda^{1/2} \begin{pmatrix}\partial u\\
    -\partial u\end{pmatrix}
      + O(\lambda) \right)\,, \\
  \begin{pmatrix} \widetilde{\varphi}_1 \\
    \widetilde{\varphi}_2 \end{pmatrix} &=
  \exp\left(-\tfrac{iz}{2} \lambda^{-1/2}\right) \left(
    \begin{pmatrix}1\\1\end{pmatrix} -
      i\lambda^{1/2}\begin{pmatrix}\partial u\\
    -\partial u\end{pmatrix}
      + O(\lambda) \right)\,.
\end{split}
\end{equation*}
Then
\begin{equation*}
  \widetilde{P} = \frac{\widetilde{\psi}\,\widetilde{\varphi}^t}
        {\widetilde{\psi}^t\,\widetilde{\varphi}} =
        \frac{1}{2}\begin{pmatrix}1&1\\1&1\end{pmatrix} -
  i \lambda^{1/2}\begin{pmatrix}\partial u&0\\
    0&-\partial u\end{pmatrix} +
  O(\lambda)
\end{equation*}
and consequently
\begin{equation}\label{eq:asymP} \begin{split}
  P &= \begin{pmatrix} \lambda^{1/2}e^{-u/2}&0\\
    0&e^{u/2}\end{pmatrix} \,
  \widetilde{P}\,
  \begin{pmatrix} \lambda^{-1/2}e^{u/2}&0\\
    0&e^{-u/2}\end{pmatrix} \\
  &= \frac{1}{2}\begin{pmatrix}1&\lambda^{1/2} e^{-u} \\
        \lambda^{-1/2}e^u&1\end{pmatrix} -
  i\lambda^{1/2}\begin{pmatrix}\partial u&0\\
    0&-\partial u\end{pmatrix} +
  O(\lambda),
  \end{split}
\end{equation}
which implies the first equation in \eqref{eq:delu_res}.
Analogous computations at $\lambda = \infty$ prove
the second equation in \eqref{eq:delu_res}.
\end{proof}
Recall the iteration in Pinkall--Sterling \cite{PinS} that
generates a sequence of solutions to the homogeneous Jacobi
equation: Starting with the 'trivial' solution
$\omega_1 = \partial u$, then
$\varphi_1 = (\partial u)^2$ and thus with the slightly
different normalization in our setting we obtain
$\tau_1 = \partial \omega_1 - \varphi_1$,
which yields a second solution
$$
\omega_2 = \partial^3 u - 2 (\partial u)^3
$$
of the homogeneous Jacobi equation.
This iterative procedure at each step requires for a given
$\omega_n$ to find $\tau_n$ solving both
\begin{equation*} \begin{split}
  \partial \tau_n &= \partial^2 \omega_n
    - 2\partial\omega_n\partial u \,,\\
  \bar{\partial} \tau_n &=
  -4Q\overline{Q}v^{-2}\omega_n =-e^{-2u}\omega_n\,,
  \end{split}
\end{equation*}
and then defining $\omega_{n+1} = \partial\tau_n + 2\tau_n\partial
u$. To solve these equations for $\tau_n$, it is useful to introduce
the auxiliary functions $\phi_n$ such that
\begin{equation} \label{eq:tau_n}
  \tau_n  = \partial\omega_n - \phi_n .
\end{equation}
Then $\phi_n$ satisfies
\begin{equation*} \begin{split}
  \partial \phi_n &= 2\partial\omega_n\partial u\,, \\
  \bar{\partial} \phi_n &= -\omega_n \sinh(2u)\,.
\end{split}
\end{equation*}
To supplement $\omega_n$ and $\tau_n$ at each step
to a parametric Jacobi field requires finding a function
$\sigma_n$ satisfying
\begin{equation*} \begin{split}
  \partial \sigma_n &=  -4Q\overline{Q}v^{-2}\omega_n\,, \\
  \bar{\partial} \sigma_n &= \bar{\partial}^2 \omega_n
    - 2\bar{\partial}\omega_n\bar{\partial} u \,.
  \end{split}
\end{equation*}
In analogy to Lemma 3.2
in Pinkall and Sterling \cite{PinS}, we obtain the formula
\begin{equation}\label{eq:sigma_n}
  \sigma_n = -e^{-2u}\left( \partial \omega_{n-1}
    + \phi_{n-1} \right)\,.
\end{equation}
We shall now see, that all these solutions
fit together in the Taylor
expansions of the functions $\omega,\,\tau,\,\phi$ at $\lambda=0$:

Let
\begin{equation*}
  \Phi = \sum_{n=0}^\infty (-1)^n \phi_n \lambda^n\,,\quad
  \Omega = \sum_{n=0}^\infty (-1)^n \omega_n \lambda^n\,.
\end{equation*}
\begin{proposition}
Let $y = \psi^t \varphi$. Then the entries of
\begin{equation*}
  P = \frac{\psi \varphi^t}{\psi^t \varphi} = \frac{1}{y}
  \begin{pmatrix} \psi_1 \varphi_1 & \psi_1 \varphi_2 \\
    \psi_2 \varphi_1 & \psi_2 \varphi_2 \end{pmatrix}
\end{equation*}
have at $\lambda=0$ the asymptotic expansions
\begin{equation} \begin{split}\label{eq:series}
  -\frac{i\omega}{2y} &= \frac{1}{\sqrt{\lambda}}\,
  \sum_{n=1}^\infty\omega_n (-\lambda)^n\,, \\
  -\frac{iQ\tau}{y}&=e^{-u}\psi_2 \varphi_1 = \frac{1}{\sqrt{\lambda}}\,
  \sum_{n=0}^\infty \tau_n (-\lambda)^n\,, \\
  -\frac{i\overline{Q}\sigma}{y}&=-e^{-u}\psi_1 \varphi_2 =
  \frac{1}{\sqrt{\lambda}}\,
  \sum_{n=1}^\infty \sigma_n (-\lambda)^n\,,\\
  -\frac{i\phi}{2y}&=\frac{iQ\tau-i\lambda^{-1}
    e^{2u}\overline{Q}\sigma}{2y}=
  \frac{-e^{-u}\psi_2\varphi_1-\lambda^{-1}e^u\psi_1\varphi_2}{2y}=
  \frac{1}{\sqrt{\lambda}}\sum_{n=0}^\infty \phi_n(-\lambda)^n\,.
\end{split}
\end{equation}
\end{proposition}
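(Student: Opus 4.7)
The plan is to derive the claim in two stages: first pin down the \emph{form} of the expansions at $\lambda=0$ (powers of $\lambda^{1/2}$ with the stated starting order), then match coefficients with the Pinkall--Sterling iteration by plugging the series into the closed-form ODEs from Proposition \ref{thm:omega_tau_sigma}.

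For the first stage, I would recycle the asymptotic formula \eqref{eq:asymP} for $P$ derived just above. Reading off its entries, $P_{11}-P_{22} = -2i\lambda^{1/2}\partial u + O(\lambda)$, $P_{21} = \tfrac12 \lambda^{-1/2}e^u + O(1)$, and $P_{12} = \tfrac12 \lambda^{1/2} e^{-u} + O(\lambda)$. Translating via $\omega/y = P_{11}-P_{22}$, $\psi_2\varphi_1/y = P_{21}$, and $\psi_1\varphi_2/y = P_{12}$, this simultaneously shows that each of the four LHS quantities $-i\omega/(2y)$, $-iQ\tau/y$, $-i\overline{Q}\sigma/y$, $-i\phi/(2y)$ admits a formal expansion in $\lambda^{-1/2}\mathbb{C}[[\lambda]]$, and it identifies the leading coefficients (in particular $\omega_1 = \partial u$, in agreement with Pinkall--Sterling's initial datum).

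For the second stage, I would feed the series into the closed-form identities from Proposition \ref{thm:omega_tau_sigma}. The equation $\lambda^{-1}\omega = 2\phi\,\partial u - \partial^2\omega$ from (v), when expanded, equates coefficients of $(-\lambda)^n$ and yields $\omega_{n+1} = \partial^2 \omega_n - 2\phi_n\,\partial u$; the companion equation $\partial\phi = 2\,\partial u\,\partial\omega$ gives $\partial\phi_n = 2\,\partial u\,\partial\omega_n$; and the algebraic identity \eqref{eq:domega&phi}, $2Q\tau = \partial\omega - \phi$, gives $\tau_n = \partial\omega_n - \phi_n$. Writing $\partial\tau_n + 2\tau_n\partial u$ and using $\partial\phi_n = 2\partial u\,\partial\omega_n$ collapses the recursion to $\omega_{n+1} = \partial\tau_n + 2\tau_n\partial u$, which is exactly the Pinkall--Sterling step. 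A parallel manipulation, using $2\lambda^{-1}e^{2u}\overline{Q}\sigma = \partial\omega + \phi$ from \eqref{eq:domega&phi}, shifts indices by one and reproduces \eqref{eq:sigma_n}, i.e. $\sigma_n = -e^{-2u}(\partial\omega_{n-1}+\phi_{n-1})$. The series for $-i\phi/(2y)$ then follows from the identity $\phi = -Q\tau + \lambda^{-1}e^{2u}\overline{Q}\sigma$, which simply combines the $\tau$- and $\sigma$-series.

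Since the Pinkall--Sterling iteration determines $\{\omega_n,\phi_n,\tau_n,\sigma_n\}$ uniquely from the initial datum $\omega_1 = \partial u$, which has been matched in stage one, the two sequences coincide and the series identities \eqref{eq:series} follow. The main obstacle I anticipate is purely bookkeeping: keeping track of the $y$-normalization, the signs introduced by $(-\lambda)^n$ versus $\lambda^n$, and the shift by one between $\omega$-indices and $\phi,\tau,\sigma$-indices; the heavy lifting (showing $\omega$ lies in the kernel of the Jacobi operator, and giving the two algebraic relations that replace the PS auxiliary function $\phi_n$) has already been done in Proposition \ref{thm:omega_tau_sigma}.
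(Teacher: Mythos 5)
Your overall architecture---use \eqref{eq:asymP} to fix the leading behaviour at $\lambda=0$, then identify the Taylor coefficients with the Pinkall--Sterling data via \eqref{eq:domega&phi} and Proposition \ref{thm:omega_tau_sigma}---is essentially the paper's, and your derivation of $\tau_n=\partial\omega_n-\phi_n$ and $\sigma_n=-e^{-2u}(\partial\omega_{n-1}+\phi_{n-1})$ from \eqref{eq:domega&phi} is exactly the first step of the paper's proof. The gap is in your closing appeal to uniqueness. The Pinkall--Sterling iteration, in the linear form you use it, does \emph{not} determine $\{\omega_n,\phi_n,\tau_n,\sigma_n\}$ uniquely from $\omega_1=\partial u$: at each stage $\phi_n$ (equivalently $\tau_n$) is prescribed only through its derivatives $\partial\phi_n=2\,\partial u\,\partial\omega_n$ and $\bar\partial\phi_n=-\omega_n\sinh(2u)$, hence is free up to an additive constant $c_n$, and that constant propagates into $\omega_{n+1}=\partial^2\omega_n-2\phi_n\,\partial u$ as an extra summand $-2c_n\,\partial u$. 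So verifying that the Baker--Akhiezer coefficients satisfy $\omega_{n+1}=\partial\tau_n+2\tau_n\partial u$ only shows the two sequences agree modulo lower-order solutions at every step; it does not identify them.

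This is precisely why the paper routes the argument through part (iv) of Proposition \ref{thm:omega_tau_sigma} rather than part (v): the quadratic identity $\left(\partial\omega\right)^2-\phi^2=\lambda^{-1}\left(y^2-\omega^2\right)$, rewritten as $\Phi^2-(\partial\Omega)^2=\tfrac{1}{4}+\lambda^{-1}\Omega^2$ and combined with the initial value $\phi_0=-\tfrac{1}{2}$ read off from \eqref{eq:asymP}, yields an \emph{explicit algebraic formula} expressing $\phi_{n+1}$ as the coefficient of $(-\lambda)^{n+1}$ in a sum of products of lower coefficients; this is the recursion of \cite[Proposition~3.1]{PinS}, and it is what fixes the integration constants and makes the sequence canonical. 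To repair your argument you would either have to verify this quadratic recursion (at which point you have reproduced the paper's proof) or supply some other mechanism pinning down each $c_n$; the linear identities of part (v) alone cannot do it.
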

\begin{proof}
Due to \eqref{eq:domega&phi} the coefficients of the series in
\eqref{eq:series} obey the equations \eqref{eq:tau_n} and
\eqref{eq:sigma_n}. Hence it suffices to show that these Taylor
coefficients obey the recursion formula
\cite[Proposition~3.1]{PinS}. The first and last equations in
\eqref{eq:series} read $\omega = 2yi\lambda^{-1/2}\Omega$ and $\phi
= 2yi\lambda^{-1/2}\Phi$. From Proposition \ref{thm:omega_tau_sigma}
(iv) we have $\left(\partial\omega\right)^2 - \phi^2 =
\lambda^{-1}\left( y^2-\omega^2 \right)$ and thus $\Phi^2 -
(\partial \Omega)^2 = \tfrac{1}{4} + \lambda^{-1} \Omega^2$, or in
terms of series
\begin{equation*}
  \left(\sum_{n=0}^\infty (-1)^n\phi_n \lambda^n\right)^2-
  \left(\sum_{n=1}^\infty (-1)^n \partial\omega_n\lambda^n\right)^2
  =
  \frac{1}{4} + \lambda^{-1}
  \left(\sum_{n=1}^\infty (-1)^n \omega_n \lambda^n\right)^2.
\end{equation*}
Due to \eqref{eq:asymP} we conclude $\phi_0=-\frac{1}{2}$.
Consequently, for all $n\in\mathbb{N}_0$ the coefficient
$\phi_{n+1}$ is equal to the coefficient of $(-\lambda)^{n+1}$ of
the series
\begin{equation*}
  \left(\sum_{n=1}^\infty (-1)^n \phi_n \lambda^n\right)^2-
  \left(\sum_{n=1}^\infty (-1)^n\partial\omega_n \lambda^n\right)^2
  - \lambda^{-1}
  \left(\sum_{n=1}^\infty (-1)^n \omega_n \lambda^n\right)^2.
\end{equation*}
\end{proof}
The involution $\varrho$ allows us to compute the asymptotic
expansions at $\lambda = \infty$ from the the asymptotic expansions
at $\lambda = 0$. Note that $\bar{\tau} = \sigma$ and thus
$\varrho^*(\tfrac{\bar{\omega}}{y}) = \tfrac{\omega}{y}$ and
$\varrho^*(\tfrac{\bar{\tau}}{y}) = \tfrac{\sigma}{y}$. We summarize
the asymptotic expansions at $\lambda = \infty$ in the following
\begin{corollary}
At $\lambda=\infty$ we have the asymptotic expansions
\begin{equation} \begin{split}\label{eq:series2}
  \frac{i\omega}{2y} &= \lambda^{1/2}\,
  \sum_{n=1}^\infty (-1)^n \overline{\omega_n}\,\lambda^{-n}\,, \\
  \frac{iQ\tau}{y}&= \lambda^{1/2}\,
  \sum_{n=1}^\infty (-1)^n \overline{\sigma_n}\,\lambda^{-n}\,, \\
  \frac{i\overline{Q}\sigma}{y} &= \lambda^{1/2}\,
  \sum_{n=0}^\infty (-1)^n \overline{\tau_n}\,\lambda^{-n} \,.
\end{split}
\end{equation}
\end{corollary}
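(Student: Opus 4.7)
The plan is to transfer each Taylor expansion at $\lambda = 0$ in \eqref{eq:series} to the corresponding expansion at $\lambda = \infty$ in \eqref{eq:series2} by exploiting the anti-holomorphic involution $\varrho$ of the spectral curve. Since $\varrho^*\bar\lambda = \lambda^{-1}$, the involution interchanges the two punctures $y^\pm$, and the composition of termwise complex conjugation with the pullback $\varrho^*$ converts a series expanded in $\lambda$ near $\lambda = 0$ into a series expanded in $\lambda^{-1}$ near $\lambda = \infty$.

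Concretely, I would first take the complex conjugate of each identity in \eqref{eq:series} termwise. This replaces $\omega_n,\,\tau_n,\,\sigma_n$ by $\overline{\omega_n},\,\overline{\tau_n},\,\overline{\sigma_n}$, swaps $Q$ with $\overline{Q}$, replaces $\omega,\,\tau,\,\sigma$ by $\overline{\omega},\,\overline{\tau},\,\overline{\sigma}$, and most importantly replaces every power of $\lambda$ by the same power of $\overline{\lambda}$. Next I pull back by $\varrho^*$. On the right hand side, $\overline{\lambda}^{n - 1/2}$ is sent to $\lambda^{1/2 - n}$, turning the power series near $\lambda = 0$ into one near $\lambda = \infty$ with the correct overall factor $\lambda^{1/2}$. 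On the left hand side, the symmetries $\varrho^*(\overline{\omega}/y) = \omega/y$ and $\varrho^*(\overline{\tau}/y) = \sigma/y$ noted just before the statement (together with $\overline{\tau} = \sigma$ and, by symmetry, $\overline{\sigma} = \tau$) allow the pulled-back left-hand sides to be identified with the quantities in \eqref{eq:series2}. The constants $Q,\,\overline{Q}$ are independent of $\lambda$ and so pass through $\varrho^*$ unchanged; their interchange under complex conjugation is exactly what turns the expansion of $-iQ\tau/y$ at $\lambda = 0$ into that of $i\overline{Q}\sigma/y$ at $\lambda = \infty$, and likewise swaps the roles of $\tau_n$ and $\sigma_n$ in the coefficients.

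I expect the main obstacle to be purely the bookkeeping of the half-integer power and of the various signs. The $\lambda^{-1/2}$ prefactor in \eqref{eq:series} must become precisely $\lambda^{1/2}$ in \eqref{eq:series2}, which is achieved by $\varrho^*$ sending $\overline{\lambda}^{-1/2}$ to $\lambda^{1/2}$; the conversion of $(-\lambda)^n$ on one side into $(-1)^n\lambda^{-n}$ on the other is then automatic. With these transformations tracked carefully, each of the three lines of \eqref{eq:series2} follows by matching coefficients on the two sides of the pulled-back and conjugated versions of \eqref{eq:series}.
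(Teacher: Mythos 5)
Your proposal is correct and follows exactly the paper's route: the paper derives \eqref{eq:series2} from \eqref{eq:series} by the same combination of termwise complex conjugation and pullback under $\varrho$, using $\varrho^*\bar{\lambda}=\lambda^{-1}$, $\bar{\tau}=\sigma$, $\varrho^*(\bar{\omega}/y)=\omega/y$ and $\varrho^*(\bar{\tau}/y)=\sigma/y$, which is precisely the bookkeeping you describe.
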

Utilizing the fact that
\begin{equation}
  \frac{\omega}{y} = \mathrm{tr} \left(
  \bigl( \begin{smallmatrix} 1&0\\0&-1\end{smallmatrix} \bigr)
    \,P \right)\,,
\end{equation}
we obtain the following
\begin{corollary}
We have the asymptotic expansions
\begin{equation*} \begin{split}
  \mathrm{tr} \left(
  \bigl( \begin{smallmatrix} 1&0\\0&-1\end{smallmatrix} \bigr)
    \,P \right) &=
    2i\lambda^{-1/2}
    \left( -\lambda\,\partial u + \lambda^2 (\partial^3 u -
    2(\partial u)^3 ) + \mathrm{O}(\lambda^3) \right)
    \quad \mbox{ at }
    \lambda = 0\,, \\
  \mathrm{tr} \left(
  \bigl( \begin{smallmatrix} 1&0\\0&-1\end{smallmatrix} \bigr)
    \,P \right) &=
    -2i\lambda^{1/2}\left( -\lambda^{-1} \bar{\partial} u +
    \lambda^{-2} (\bar{\partial}^3 u -
    2(\bar{\partial} u)^3 ) + \mathrm{O}(\lambda^{-3}) \right)
    \quad \mbox{ at }
    \lambda = \infty\,.
    \end{split}
\end{equation*}
\end{corollary}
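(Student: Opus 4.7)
The plan is to read the corollary off as the first two terms of series expansions that have already been established in the preceding proposition and corollary, after substituting the explicit values of the first two Pinkall--Sterling iterates. First I would record the algebraic identity
\begin{equation*}
  \mathrm{tr} \bigl( \bigl( \begin{smallmatrix} 1&0\\0&-1\end{smallmatrix} \bigr) P \bigr) = \frac{\omega}{y},
\end{equation*}
which is immediate from $P = \psi\varphi^t/(\psi^t\varphi)$, $y=\psi^t\varphi$, and $\omega = \psi_1\varphi_1 - \psi_2\varphi_2$; this is exactly the identity displayed just before the statement.

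For the expansion at $\lambda = 0$, I would use the first line of \eqref{eq:series} to write
\begin{equation*}
  \frac{\omega}{y} \;=\; 2i\lambda^{-1/2}\sum_{n=1}^\infty (-1)^n\omega_n\lambda^n \;=\; 2i\lambda^{-1/2}\bigl(-\lambda\,\omega_1 + \lambda^2\omega_2 + \mathrm{O}(\lambda^3)\bigr).
\end{equation*}
The paragraph recalling the Pinkall--Sterling iteration (just before \eqref{eq:tau_n}) already identifies the first two iterates as $\omega_1 = \partial u$ and $\omega_2 = \partial^3 u - 2(\partial u)^3$, so direct substitution yields the first asymptotic expansion.

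For the expansion at $\lambda = \infty$, the first line of \eqref{eq:series2} gives
\begin{equation*}
  \frac{\omega}{y} \;=\; -2i\lambda^{1/2}\sum_{n=1}^\infty (-1)^n\overline{\omega_n}\,\lambda^{-n} \;=\; -2i\lambda^{1/2}\bigl(-\lambda^{-1}\overline{\omega_1} + \lambda^{-2}\overline{\omega_2} + \mathrm{O}(\lambda^{-3})\bigr).
\end{equation*}
Since $u$ is real, complex conjugation exchanges $\partial$ with $\bar{\partial}$, so $\overline{\omega_1} = \bar{\partial} u$ and $\overline{\omega_2} = \bar{\partial}^3 u - 2(\bar{\partial} u)^3$, yielding the second asymptotic expansion. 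There is no real obstacle here: the substantive work was carried out in the preceding proposition (which established that $\omega/y$ has these series expansions with Pinkall--Sterling coefficients) and in the recalled iteration (which produced the closed-form $\omega_1,\omega_2$); the corollary itself is only a matter of collecting the first two terms on each side.
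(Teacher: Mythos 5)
Your proposal is correct and matches the paper's intent exactly: the paper states this corollary without proof precisely because it follows by substituting $\omega_1=\partial u$ and $\omega_2=\partial^3 u-2(\partial u)^3$ into the series \eqref{eq:series} and \eqref{eq:series2} via the identity $\mathrm{tr}\bigl(\bigl(\begin{smallmatrix}1&0\\0&-1\end{smallmatrix}\bigr)P\bigr)=\omega/y$. The sign bookkeeping ($-i\omega/2y$ versus $\omega/y$) and the use of reality of $u$ to conjugate the coefficients at $\lambda=\infty$ are both handled correctly.
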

%
%
The poles of $P$ are at the branch points of the spectral curve.
Clearly the function $y = \psi^t\varphi$ is antisymmetric with
respect to the hyperelliptic involution, and has zeroes at the
branch points $a_1,\,\ldots,\,a_{2g}$. For doubly periodic solutions
$u$ of \eqref{eq:sinh-Gordon} the differentials $d\ln
\mu_k,\,k=1,\,2$ can be locally written as
\begin{equation*}
  d\ln\mu_k = \beta_k(y)\,dy\,,
\end{equation*}
and if $\gamma_1$ and $\gamma_2$ denote the periods, there exist
two differentials $dp^{\pm}$ which are holomorphic at
$\lambda = \infty$ respectively $\lambda = 0$ and
\begin{equation} \label{eq:dp^pm}
   d\ln\mu_k = \gamma_k\,dp^+ +\bar{\gamma}_k\,dp^- \,.
\end{equation}
If $\Gamma$ is the lattice generated by $\gamma_1$ and $\gamma_2$,
$\mathbb{T} = \C / \Gamma$, and $f:\mathbb{T} \to \C$, then
we denote
\begin{equation*}
  \langle f \rangle = \frac{1}{\mathrm{area}(\mathbb{T})}
  \int_{\mathbb{T}} f\,dS\,.
\end{equation*}
For a period $\gamma$ let $\Delta_\gamma$ denote the period defect,
which is the difference between the identity operator and the
translation by $\gamma$.
\begin{proposition} \label{th:dp^pm}
The differentials $dp^\pm$ have the following asymptotic
expansions
\begin{xalignat*}{2}
  dp^+ &= d\lambda^{1/2} \left( -\tfrac{i}{2}\lambda^{-1} -
  i\langle (\partial u)^2 \rangle + \mathrm{O}(\lambda) \right)
  &\mbox{ at } \lambda = 0\,,\\
  dp^- &= d\lambda^{1/2} \left( \tfrac{i}{2}
  \langle \cosh(2u) \rangle + \mathrm{O}(\lambda) \right)
  &\mbox{ at } \lambda = 0\,,\\
  dp^+ &= d\lambda^{-1/2} \left(
  \tfrac{i}{2}\langle \cosh(2u) \rangle
  + \mathrm{O}(\lambda^{-1} \right)
  &\mbox{ at } \lambda = \infty\,,\\
  dp^- &= d\lambda^{-1/2} \left( -\tfrac{i}{2}\lambda -
  i\langle (\partial u)^2 \rangle + \mathrm{O}(\lambda^{-1} \right)
  &\mbox{ at } \lambda = \infty\,.
\end{xalignat*}
\end{proposition}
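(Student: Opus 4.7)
The plan is to express $dp^\pm$ in terms of averages of logarithmic derivatives of the Baker--Akhiezer function $\psi$ and then compute these averages asymptotically via a Riccati equation for $r = \psi_2/\psi_1$. Since both components of $\psi$ transform as $\psi_j(z+\gamma_k) = \mu_k(\lambda)\psi_j(z)$, both $\partial\ln(\psi_1\psi_2)$ and $\bar\partial\ln(\psi_1\psi_2)$ are doubly periodic, and a line integral along $\gamma_k$ combined with averaging over $\mathbb{T}$ gives
\[2\ln\mu_k = \gamma_k\bigl\langle\partial\ln(\psi_1\psi_2)\bigr\rangle + \bar\gamma_k\bigl\langle\bar\partial\ln(\psi_1\psi_2)\bigr\rangle.\]
Comparing with \eqref{eq:dp^pm} identifies $dp^+ = \tfrac12\,d\bigl\langle\partial\ln(\psi_1\psi_2)\bigr\rangle$ and $dp^- = \tfrac12\,d\bigl\langle\bar\partial\ln(\psi_1\psi_2)\bigr\rangle$, so the whole proposition reduces to computing these two asymptotic expansions in $\lambda^{1/2}$.

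Next, I would derive the asymptotic expansion of $r = \psi_2/\psi_1$ at $\lambda = 0$ using the Riccati equations
\[\partial r = \tfrac{i}{2}\lambda^{-1}e^u - (\partial u)\,r - \tfrac{i}{2}e^{-u}r^2, \qquad \bar\partial r = \tfrac{i}{2}e^{-u} + (\bar\partial u)\,r - \tfrac{i}{2}\lambda e^u r^2,\]
which follow directly from \eqref{eq:psi}. Inserting the ansatz $r = \lambda^{-1/2}r_{-1} + r_0 + \lambda^{1/2}r_1 + \mathrm{O}(\lambda)$ and solving order by order yields $r_{-1} = e^u$, $r_0 = 2i e^u\partial u$, and $r_1 = -2e^u((\partial u)^2 + \partial^2 u)$, with the $\bar\partial$-equation at order $\lambda^0$ recovering sinh-Gordon \eqref{eq:sinh-Gordon} as a consistency check.

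Substituting this expansion into $\partial\ln\psi_1 = \tfrac12\partial u + \tfrac{i}{2}e^{-u}r$ and $\partial\ln\psi_2 = \tfrac{i}{2}\lambda^{-1}e^u r^{-1} - \tfrac12\partial u$, together with the analogous expressions for $\bar\partial$, produces cancellations of the $\partial u$ and $\partial^2 u$ terms in the symmetric sums that give
\[\partial\ln(\psi_1\psi_2) = i\lambda^{-1/2} - 2i\lambda^{1/2}(\partial u)^2 + \mathrm{O}(\lambda),\]
\[\bar\partial\ln(\psi_1\psi_2) = i\lambda^{1/2}\cosh(2u) - \lambda\,\partial\cosh(2u) + \mathrm{O}(\lambda^{3/2}).\]
Since $\partial u$, $\bar\partial u$, and $\partial\cosh(2u)$ all have zero mean on $\mathbb{T}$ (being derivatives of smooth periodic functions), averaging and then differentiating on the spectral curve via $d\lambda^{-1/2} = -\lambda^{-1}d\lambda^{1/2}$ yields the asymptotic expansions of $dp^\pm$ at $\lambda = 0$ exactly as stated.

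For the expansions at $\lambda = \infty$, I would invoke the involution $\varrho$ of the spectral curve, which interchanges $y^+ \leftrightarrow y^-$, combined with the $z \leftrightarrow \bar z$, $u \mapsto -u$ symmetry of sinh-Gordon that interchanges $\partial u$ with $\bar\partial u$ and $e^{2u}$ with $e^{-2u}$ while fixing $\cosh(2u)$; under this symmetry the expansions at $\lambda = 0$ transform into the stated ones at $\lambda = \infty$. The main obstacle is verifying the delicate cancellation of $\partial u$ and $\partial^2 u$ contributions in the symmetric combination $\partial\ln(\psi_1\psi_2)$, which forces the Riccati recursion to be pushed carefully through two nontrivial orders.
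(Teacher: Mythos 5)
Your argument is correct, and it reaches the stated expansions by a genuinely different mechanism than the paper. The paper writes the gauged Baker--Akhiezer function as $\widetilde\psi = \exp(p^+z+p^-\bar z)\,\hat\psi$ with $\hat\psi$ periodic, expands $\hat\psi=\sum a_n\lambda^{n/2}\binom{1}{1}+\sum b_n\lambda^{n/2}\binom{1}{-1}$, and extracts $p_1^{+}=-i\langle(\partial u)^2\rangle$ and $p_1^{-}=\tfrac{i}{2}\langle\cosh(2u)\rangle$ as the integration constants forced by periodicity of $a_1$ in the resulting linear recursion; you instead first prove the closed formulas $dp^+=\tfrac12\,d\langle\partial\ln(\psi_1\psi_2)\rangle$, $dp^-=\tfrac12\,d\langle\bar\partial\ln(\psi_1\psi_2)\rangle$ (which follow from \eqref{eq:dp^pm} and the $\R$-linear independence of $\gamma_1,\gamma_2$), and then feed them with a Riccati expansion of $r=\psi_2/\psi_1$. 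I checked your Riccati coefficients $r_{-1}=e^u$, $r_0=2ie^u\partial u$, $r_1=-2e^u((\partial u)^2+\partial^2 u)$ and the resulting cancellations in $\tfrac{i}{2}e^{-u}r+\tfrac{i}{2}\lambda^{-1}e^ur^{-1}$ and $\tfrac{i}{2}\lambda e^ur+\tfrac{i}{2}e^{-u}r^{-1}$; they are right, and the averaged formulas reproduce the paper's coefficients exactly. What your route buys is an ansatz-free characterization of $dp^\pm$ that ties them directly to the function $\psi_1\psi_2$ (the same object the paper later identifies as the Baker--Akhiezer function of the Fermi curve); what the paper's route buys is that $p^\pm$ appear as honest functions with a full recursive expansion of the periodic remainder, which is reusable elsewhere. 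Two small points you should make explicit: the average $\langle\partial\ln(\psi_1\psi_2)\rangle$ is only unambiguous where $\psi_1\psi_2$ is zero-free on the torus, which your own leading asymptotics guarantee for $\lambda$ near $0$ and $\infty$ (the only places you use it); and your symmetry argument at $\lambda=\infty$, like the paper's appeal to $\varrho^*\overline{dp^\pm}=-dp^\mp$, produces $\langle(\bar\partial u)^2\rangle=\overline{\langle(\partial u)^2\rangle}$ in the constant term there, so it matches the statement as written only modulo the reality of $\langle(\partial u)^2\rangle$ --- a feature shared with the paper's own proof, not a defect of yours.
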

\begin{proof}
Let $\widetilde\psi = (\widetilde{\psi}_1,\,\widetilde{\psi}_2)$
as in \eqref{eq:psitilde} and define $\hat{\psi}$ by
$\widetilde\psi = \exp(p^+ z + p^- \bar{z})\, \hat{\psi}$.
Since $\mu_k$ is the automorphy factor of $\widetilde{\psi}$
with respect to $\gamma_k$, then
$\Delta_{\gamma_k}\hat\psi = 0$, and we expand
\begin{equation*}
        \begin{split}
  \hat{\psi}(z,\,\lambda) &= \begin{pmatrix}
    1\\1\end{pmatrix} \,\left(\sum a_n(z) \lambda^{n/2} \right)
    + \begin{pmatrix}
    1\\-1\end{pmatrix} \,\left(\sum  b_n(z)\lambda^{n/2}\right)
    \quad \mbox{ with }a_0=1,\,b_0=0\,, \\
  p^+(\lambda) &= \sum p^+_n\,\lambda^{n/2}\,,\quad
    p^-(\lambda) = \sum p^-_n\,\lambda^{n/2} \quad \mbox{ and } p_{-1}^+ =
    \tfrac{i}{2},\, p_{-1}^- = 0\,.
    \end{split}
\end{equation*}
The differential equations for $\widetilde{\psi}$
and comparison of like coefficients yields
\begin{equation*} \begin{split}
  \partial a_n &= b_n \partial u - p_1^+ a_{n-1} -
  \ldots - p_n^+ a_0\,, \\
  i\,b_{n+1} &= a_n \partial u - p_1^+b_{n-1} -
  \ldots - p^+_{n-1}b_1 -
  \partial b_n\,, \\
  \bar{\partial} a_n &= - p_1^- a_{n-1} - \ldots - p_n^- a_0 +
    \tfrac{i}{2}(a_{n-1}\cosh(2u) - b_{n-1}\sinh(2u))\,, \\
    \bar{\partial} b_n &= - p_1^- b_{n-1} - p_3^- b_{n-3} - \ldots
    - p_{n-1}^- b_1 +
    \tfrac{i}{2}(a_{n-1}\sinh(2u) - b_{n-1}\cosh(2u))\,.
    \end{split}
\end{equation*}
In particular, $b_1 = -i\partial u$. Further
$\partial a_1 = -i(\partial u)^2 - p_1^+$, so integration yields
$p_1^+ = -i\langle (\partial u)^2 \rangle$.
Thus  $\partial a_1 = -i(\partial u)^2 -
i\langle (\partial u)^2 \rangle$ which gives
$$
  \bar{\partial}\partial a_1 = i\partial u \sinh(2u) =
  \partial \tfrac{i}{2} \cosh(2u)
$$
and therefore $\bar{\partial}a_1 = -p^-_1 + \tfrac{i}{2} \cosh(2u)$
and consequently $p_1^- = \langle \tfrac{i}{2} \cosh(2u) \rangle$.
This proves the asymptotic expansions at $\lambda = 0$.
The asymptotic expansions at $\lambda = \infty$ follow from the
fact that
$$
  \varrho^* \overline{d \ln \mu} = -d \ln \mu,
$$
and therefore $\varrho^* \overline{dp^+} = -dp^-$
and $\varrho^* \overline{dp^-} = -dp^+$.
\end{proof}
We now compute the variation of the Baker-Akhiezer function $\psi$
that corresponds to the solution $\dot{u}= \left. (\psi_1 \varphi_1
- \psi_2 \varphi_2) \right|_{\lambda=a}$ of the homogeneous Jacobi
equation at some fixed value $\lambda=a \in \C^*$.
\begin{proposition}\label{th:delta_u}
Let $u$ be a solution of the sinh-Gordon equation
\eqref{eq:sinh-Gordon}. Let $\xi_{ij} = \left. (\psi_i \varphi_j)
\right|_a$ and $\dot{u} = \xi_{11}-\xi_{22}$ be given in terms of
solutions $\psi = (\psi_1,\,\psi_2)^t$ of \eqref{eq:psi} and
$\varphi = (\varphi_1,\,\varphi_2)^t$ of \eqref{eq:phi} for a fixed
value $\lambda=a\in\C^*$. Then the variation of the corresponding
solution $\psi$ of $\eqref{eq:psi}$ is given by
\begin{equation} \label{eq:psi_dot}
  \begin{pmatrix} \dot{\psi_1} \\
    \dot{\psi_2} \end{pmatrix} =
  \frac{1}{\lambda - a} \begin{pmatrix}
    (\lambda + a)\,\xi_{11} & 2\lambda\,\xi_{12} \\
    2a\,\xi_{21} & (\lambda + a)\,\xi_{22} \end{pmatrix}
  \begin{pmatrix} \psi_1 \\
    \psi_2 \end{pmatrix}\,.
\end{equation}
\end{proposition}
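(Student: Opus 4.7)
Write \eqref{eq:psi} compactly as $\partial\psi=U(\lambda)\psi$ and $\bar\partial\psi=V(\lambda)\psi$, with $U(\lambda),V(\lambda)$ the $\mathfrak{sl}(2,\C)$-valued coefficient matrices. Differentiating in the deformation parameter $t$ gives the linearization
\begin{equation*}
  \partial\dot\psi = U(\lambda)\,\dot\psi + \dot U(\lambda)\,\psi,
  \qquad
  \bar\partial\dot\psi = V(\lambda)\,\dot\psi + \dot V(\lambda)\,\psi,
\end{equation*}
with $\dot U(\lambda) = \tfrac12\bigl(\begin{smallmatrix}\partial\dot u & -ie^{-u}\dot u \\ i\lambda^{-1}e^u\dot u & -\partial\dot u\end{smallmatrix}\bigr)$ and $\dot V(\lambda)$ analogous. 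Substituting the ansatz $\dot\psi = M(\lambda)\,\psi$ with $M$ the matrix of \eqref{eq:psi_dot}, the problem reduces to the matrix identities
\begin{equation*}
  \partial M - [U(\lambda),M] = \dot U(\lambda),\qquad
  \bar\partial M - [V(\lambda),M] = \dot V(\lambda),
\end{equation*}
since any two choices of $\dot\psi$ differ by a solution of the unperturbed system \eqref{eq:psi}.

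My plan is to verify these identities after recasting $M$ in the more transparent form
\begin{equation*}
  M = \frac{\lambda+a}{\lambda-a}\,\Xi + N,\qquad
  \Xi := (\xi_{ij}) = \psi(a)\,\varphi(a)^t,\qquad
  N := \begin{pmatrix}0 & \xi_{12}\\ -\xi_{21} & 0\end{pmatrix},
\end{equation*}
a rewriting forced by matching $\operatorname{Res}_{\lambda=a} M = 2a\,\Xi$ and keeping the regular part off-diagonal. Since \eqref{eq:phi} is \eqref{eq:psi} with $U,V$ replaced by $-U^t,-V^t$, combining the equations for $\psi(a)$ and $\varphi(a)$ gives the key evolution
\begin{equation*}
  \partial\Xi = [U(a),\Xi],\qquad \bar\partial\Xi = [V(a),\Xi].
\end{equation*}
Its diagonal part recovers $\partial\dot u = ie^{-u}\xi_{21} - ia^{-1}e^u\xi_{12}$, consistent with $\dot u = \xi_{11}-\xi_{22}$, and its off-diagonal part yields the explicit expressions for $\partial\xi_{12},\partial\xi_{21}$ needed below.

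It follows that
\begin{equation*}
  \partial M - [U(\lambda),M]
    = \frac{\lambda+a}{\lambda-a}\,[U(a)-U(\lambda),\,\Xi]
    + \partial N - [U(\lambda),N].
\end{equation*}
The apparent pole at $\lambda=a$ cancels since $U(a)-U(\lambda) = \tfrac{i(\lambda-a)}{2\lambda a}\,e^u\,\bigl(\begin{smallmatrix}0&0\\ 1&0\end{smallmatrix}\bigr)$ turns the prefactor into the regular function $\tfrac{\lambda+a}{2\lambda a}$. An entry-by-entry check using the formulas for $\partial\xi_{ij}$ then matches the four entries of $\dot U(\lambda)$; for instance the $(2,1)$ entry collapses to $\tfrac{i\lambda^{-1}e^u\dot u}{2} = (\dot U)_{21}$. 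The $\bar\partial$ identity is verified by the analogous computation, using that $V(\lambda)-V(a)$ also has a simple zero at $\lambda=a$. The only nontrivial step is guessing the correct decomposition of $M$; after that, everything is routine $2\times 2$ linear algebra, so I expect no serious obstacle.
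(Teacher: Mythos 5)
Your proposal is correct; I checked all four entries of both the $\partial$ and $\bar\partial$ identities and they do reduce to $\dot U(\lambda)$ and $\dot V(\lambda)$ as you claim. The overall framework is the same as the paper's (linearize \eqref{eq:psi} in $t$, posit $\dot\psi = M\psi$, reduce to the matrix equations $\partial M - [U,M] = \dot U$ and $\bar\partial M - [V,M] = \dot V$, and use the derivative formulas for the $\xi_{ij}$), but the execution differs in a genuine way. The paper does not verify a given $M$: it writes $M = \bigl(\begin{smallmatrix}\alpha&\beta\\\gamma&\delta\end{smallmatrix}\bigr)$, computes the commutators explicitly, makes the Ansatz $\alpha = A\,\xi_{11}$, $\beta = B\,\xi_{12}$, $\gamma = C\,\xi_{21}$, $\delta = A\,\xi_{22}$ with undetermined scalars, and \emph{derives} $A = \tfrac{\lambda+a}{\lambda-a}$, $B = \tfrac{2\lambda}{\lambda-a}$, $C = \tfrac{2a}{\lambda-a}$ from the three relations $A - C = 1$, $B\lambda^{-1} = a^{-1}(A-1)$, $B = A+1$. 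Your route instead splits $M = \tfrac{\lambda+a}{\lambda-a}\,\Xi + N$ and exploits the Lax-type evolution $\partial\Xi = [U(a),\Xi]$, so that the singular part contributes $\tfrac{\lambda+a}{\lambda-a}[U(a)-U(\lambda),\Xi]$, whose pole cancels structurally because $U(a)-U(\lambda)$ vanishes to first order at $\lambda = a$. This is arguably cleaner and explains \emph{why} the pole at $\lambda = a$ is harmless, at the cost of having to guess the formula in advance; the paper's coefficient comparison is less transparent but constructive. One small point: since $\dot\psi$ is only determined up to adding a solution of the unperturbed system \eqref{eq:psi}, a pure verification establishes the proposition exactly to the same extent as the paper's derivation does, so nothing is lost by your approach.
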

\begin{proof}
By \eqref{eq:psi} we have that
\begin{equation*}  \begin{split}
  \partial \begin{pmatrix} \dot{\psi_1} \\
    \dot{\psi_2} \end{pmatrix} &=
  \frac{1}{2}\,\begin{pmatrix}
  \partial u & ie^{-u} \\ i\lambda^{-1}e^u & -\partial u
  \end{pmatrix}
  \begin{pmatrix} \dot{\psi_1} \\ \dot{\psi_2} \end{pmatrix}
   + \frac{1}{2}\,\begin{pmatrix}
  \partial \dot{u} & -i\dot{u}e^{-u} \\
  i\dot{u}\lambda^{-1}e^u & -\partial \dot{u} \end{pmatrix}
  \begin{pmatrix} \psi_1 \\ \psi_2 \end{pmatrix}\,, \\
  \bar{\partial} \begin{pmatrix} \dot{\psi_1} \\
    \dot{\psi_2} \end{pmatrix} &=
  \frac{1}{2}\,\begin{pmatrix}
  -\bar{\partial}u & i\lambda e^u \\ ie^{-u} &
  \bar{\partial}u \end{pmatrix}
  \begin{pmatrix} \dot{\psi_1} \\ \dot{\psi_2} \end{pmatrix}
  + \frac{1}{2}\,\begin{pmatrix}
  -\bar{\partial}\dot{u} & i\dot{u}\lambda e^u \\
  -i\dot{u}e^{-u} & \bar{\partial}\dot{u} \end{pmatrix}
  \begin{pmatrix} \psi_1 \\ \psi_2 \end{pmatrix}.
\end{split}
\end{equation*}
Consequently,
\begin{equation} \label{eq:del_u_dot}
    \partial \dot{u} = ie^{-u}\xi_{21} -
    ia^{-1}e^u \xi_{12}\,,\quad
    \bar{\partial} \dot{u} = iae^{u}\xi_{21} -
    ie^{-u} \xi_{12}\,.
\end{equation}
Let $X = \bigl( \begin{smallmatrix} \alpha & \beta \\ \gamma &
\delta \end{smallmatrix} \bigr)$ be the matrix such
that $\dot{\psi} = X\,\psi$. Then
\begin{equation} \begin{split} \label{eq:A_equation}
  2\,\partial \begin{pmatrix} \alpha & \beta \\ \gamma &
    \delta \end{pmatrix} + \left[\,
    \begin{pmatrix} \alpha & \beta \\ \gamma &
      \delta \end{pmatrix},\, \begin{pmatrix}
  \partial u & ie^{-u} \\ i\lambda^{-1}e^u & -\partial u
  \end{pmatrix}\,\right]
  &=  \begin{pmatrix}
  \partial \dot{u} & -i\dot{u}e^{-u} \\
  i\dot{u}\lambda^{-1}e^u & -\partial\dot{u}\end{pmatrix}\,,\\
 2\,\bar{\partial} \begin{pmatrix} \alpha & \beta \\ \gamma &
    \delta \end{pmatrix} + \left[\,
    \begin{pmatrix} \alpha & \beta \\ \gamma &
      \delta \end{pmatrix},\, \begin{pmatrix}
  -\bar{\partial} u & i\lambda e^u \\ ie^{-u} & \bar{\partial}u
  \end{pmatrix}\,\right]
  &= \begin{pmatrix}
  -\bar{\partial}\dot{u} & i\dot{u}\lambda e^u \\
  -i\dot{u}e^{-u} & \bar{\partial}\dot{u} \end{pmatrix}\,.
\end{split}
\end{equation}
We compute the commutators
\begin{equation*} \begin{split}
  \left[\,
    \begin{pmatrix} \alpha & \beta \\ \gamma &
      \delta \end{pmatrix},\,\begin{pmatrix}
  \partial u & ie^{-u} \\ i\lambda^{-1}e^u & -\partial u
  \end{pmatrix}\,\right]
  &=  \begin{pmatrix} \beta i\lambda^{-1}e^u -
  \gamma i e^{-u} & (\alpha-\delta)ie^{-u} -2\beta\partial u\\
  2\gamma\partial u + (\delta - \alpha)i\lambda^{-1}e^u &
  \gamma ie^{-u} - \beta i \lambda^{-1} e^u \end{pmatrix}\,, \\
  \left[\,
    \begin{pmatrix} \alpha & \beta \\ \gamma &
      \delta \end{pmatrix},\, \begin{pmatrix}
  -\bar{\partial} u & i\lambda e^u \\ ie^{-u} & \bar{\partial}u
  \end{pmatrix}\,\right]
  &= \begin{pmatrix}
  \beta i e^{-u} - \gamma i \lambda e^u &
  (\alpha - \delta)i \lambda e^u + 2\beta \bar{\partial}u \\
  (\delta - \alpha)ie^{-u}-2\gamma\bar{\partial}u &
  \gamma i\lambda e^u - \beta ie^{-u} \end{pmatrix}\,.
  \end{split}
\end{equation*}
Thus equations \eqref{eq:A_equation} read
\begin{equation*} \begin{split}
  2\,\partial \begin{pmatrix} \alpha & \beta \\ \gamma &
    \delta \end{pmatrix}
  &= \begin{pmatrix}
  (\gamma +\xi_{21})ie^{-u} -
    (a^{-1}\xi_{12}+\beta\lambda^{-1})ie^u &
  (\xi_{22}-\xi_{11} -\alpha + \delta)ie^{-u} +
  2\beta\partial u \\
  (\xi_{11}-\xi_{22}+\alpha-\delta)i\lambda^{-1}e^u -
  2\gamma\partial u &
  (a^{-1}\xi_{12} + \beta \lambda^{-1})ie^u -
  (\xi_{21} + \gamma)ie^{-u} \end{pmatrix},\\
 2\,\bar{\partial} \begin{pmatrix} \alpha & \beta \\ \gamma &
    \delta \end{pmatrix}
  &=
 \begin{pmatrix}
  (\xi_{12}-\beta) i e^{-u} +(\gamma\lambda -a\xi_{21})ie^u &
  (\xi_{11}-\xi_{22}-\alpha + \delta)i \lambda e^u -
   2\beta \bar{\partial}u \\
  (\alpha-\delta+\xi_{22}-\xi_{11})ie^{-u} +
   2\gamma\bar{\partial}u &
   (\beta-\xi_{12})ie^{-u} + (a\xi_{21}-\gamma\lambda)i e^u
 \end{pmatrix}.
\end{split}
\end{equation*}
We make the Ansatz
$\alpha =A\,\xi_{11},\,\beta =B\,\xi_{12},\,\gamma =C\,\xi_{21}$
and $\delta = A\,\xi_{22}$, and use the fact that
\begin{equation*}\begin{split}
    2\,\partial \xi_{11}= \xi_{21}ie^{-u}- a^{-1}\xi_{12}ie^u\,,
    \quad
    &2\,\bar{\partial}\xi_{11}=-\xi_{12}ie^{-u}+a\xi_{21}ie^u\,,\\
    2\,\partial \xi_{12}=2\,\xi_{12}\,\partial u
    +(\xi_{22}-\xi_{11})ie^{-u}\,,
    \quad
    &2\,\bar{\partial}\xi_{12}=-2\,\xi_{12}\,\bar{\partial}u
    + a(\xi_{22}-\xi_{11})ie^u\,, \\
    2\,\partial \xi_{21}= -2\,\xi_{21}\,\partial u +
    a^{-1}(\xi_{11}-\xi_{22})ie^u\,,
    \quad
    &2\,\bar{\partial}\xi_{21}= 2\,\xi_{21}\,\bar{\partial}u +
    (\xi_{11}-\xi_{22})ie^{-u}\,, \\
    2\,\partial \xi_{22}=-\xi_{21}ie^{-u} + a^{-1}\xi_{12}ie^u\,,
    \quad
    &2\,\bar{\partial} \xi_{22} = \xi_{12}ie^{-u} -
    a^{-1}\xi_{21}ie^u\,.
\end{split}
\end{equation*}
Comparison of coefficients of $e^{-u},\,e^u$ reduce to
the three equations
\begin{equation*}
  A-C =1\,, \quad B\lambda^{-1} = a^{-1}(A-1)\mbox{ and }
  B=A+1\,.
\end{equation*}
Solving these yields the claim and concludes the proof.
\end{proof}
The Fermi curve contains a lot of information of the spectral
theory of the Jacobi operator. In particular, it should be possible to
determine the Maslov index in terms of the spectral curve of the
solution of the sinh-Gordon equation.
%
%
\begin{lemma}
Let $u$ be a doubly periodic solution of the sinh-Gordon equation,
and let $\psi = (\psi_1,\,\psi_2)^t$ be a solution of
\eqref{eq:psi}. Then $\psi_1\psi_2$ is the Baker-Akhiezer function
of the Fermi curve of the Jacobi operator.
\end{lemma}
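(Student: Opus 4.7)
The plan is to check that $g:=\psi_1\psi_2$ has the three defining features of the Baker-Akhiezer function on the Fermi curve of the Jacobi operator $J=\bar\partial\partial+\cosh(2u)$: it lies in the kernel of $J$, it is a Bloch eigenfunction with respect to the period lattice, and on the spectral curve it carries the correct singularity data (divisor and essential singularities at $\lambda=0,\infty$) to be identified with a Baker-Akhiezer function on the Fermi curve.

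\textbf{Step 1 (Kernel condition).} A direct computation using \eqref{eq:psi} gives
\begin{equation*}
  \partial g = \tfrac{i}{2}\bigl(e^{-u}\psi_2^{\,2}+\lambda^{-1}e^{u}\psi_1^{\,2}\bigr),\qquad
  \bar\partial g = \tfrac{i}{2}\bigl(\lambda e^{u}\psi_2^{\,2}+e^{-u}\psi_1^{\,2}\bigr).
\end{equation*}
Applying \eqref{eq:psi} a second time, the mixed terms involving $\partial u$, $\bar\partial u$ cancel and one is left with $\bar\partial\partial g = -\tfrac12(e^{2u}+e^{-2u})g = -\cosh(2u)\,g$; hence $Jg=0$. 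This is the analogue of the computation already carried out in \eqref{eq:omega_derivatives} for $\psi_1\varphi_1-\psi_2\varphi_2$.

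\textbf{Step 2 (Bloch behaviour).} Since $u$ is doubly periodic with lattice $\Gamma=\gamma_1\Z\oplus\gamma_2\Z$ and $\psi$ is, after normalization, the Baker-Akhiezer function of the spectral curve of $u$, the translations by $\gamma_k$ act on $\psi$ as scalar multiplication by the automorphy factor $\mu_k$. Consequently $\tau_{\gamma_k}^{\,*}g=\mu_k^{2}\,g$, so that $g$ is a Bloch solution of $J g=0$ with Floquet multipliers $\mu_k^2$ ($k=1,2$). In terms of the exponents $p^\pm$ of Proposition \ref{th:dp^pm} this reads $g=\lambda^{1/2}\exp(2p^+z+2p^-\bar z)\,\hat g$ with $\hat g=\hat\psi_1\hat\psi_2$ doubly periodic, using the gauge \eqref{eq:psitilde}.

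\textbf{Step 3 (Analytic/asymptotic structure on the spectral curve).} Viewed as a function of the spectral parameter, $g$ is meromorphic on $Y\setminus\{y^\pm\}$; its poles come from the poles of $\psi$ and hence lie in a Baker-Akhiezer divisor supported on the branch points of $Y$. At $y^+$ and $y^-$ the exponential factors $\exp(2p^\pm z+\ldots)$ produce essential singularities whose leading behaviour is precisely that prescribed for a Baker-Akhiezer function of a two-dimensional periodic operator (one exponential direction per ``infinity''). Uniqueness of the Baker-Akhiezer function with prescribed divisor and exponential leading terms then allows us to identify $g$ with the Baker-Akhiezer function attached to $J$ on $Y$.

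\textbf{Step 4 (Identification with the Fermi curve).} By Steps 1--2 the assignment $\lambda\mapsto(k_1,k_2):=(2p^+(\lambda),2p^-(\lambda))$ embeds the spectral curve into the Bloch variety $\{(k_1,k_2)\in\C^2/\Gamma^{*}:\exists\,\phi\not\equiv0\text{ Bloch of }J\phi=0\}$, which is the Fermi curve at energy $0$. The image carries a one-dimensional family of Bloch solutions $g$, and a dimension count (or equivalently the fact that for a second-order elliptic operator the Fermi curve at a fixed energy is one-dimensional and irreducible over each Bloch direction) forces this map to be surjective onto a component and hence an isomorphism. The main obstacle is precisely this last surjectivity/irreducibility statement; it relies on the finite-type structure of $u$ (so that the Fermi curve is itself of finite genus) together with the observation, already built into Steps 1--2, that the analytic and exponential data of $g$ exhaust all normalizations of a Bloch solution of $Jg=0$.
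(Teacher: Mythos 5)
Your Steps 1 and 2 reproduce the substance of the paper's proof: the computation $\bar{\partial}\partial(\psi_1\psi_2)=-\cosh(2u)\,\psi_1\psi_2$ is correct, and the observation that translation by a period $\gamma_j$ multiplies $\psi_1\psi_2$ by $\mu_j^2$ shows that every point of the spectral curve produces a Bloch solution of the Jacobi equation, i.e.\ that the spectral curve maps to the Fermi curve of the Jacobi operator as a finite covering.

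The genuine gap is in Steps 3--4, where you must show that this covering has degree one. Your appeal to ``uniqueness of the Baker-Akhiezer function with prescribed divisor and exponential leading terms,'' followed by a ``dimension count,'' does not establish injectivity: two distinct points of the spectral curve over the same $\lambda$ could a priori yield the same Floquet multipliers $(\mu_1^2,\mu_2^2)$, hence the same point of the Fermi curve, and this would occur exactly if the space of Bloch solutions of $Jg=0$ with fixed multipliers were generically two-dimensional. You flag this surjectivity/irreducibility issue as ``the main obstacle'' but then leave it unresolved, so the identification of $\psi_1\psi_2$ with \emph{the} Baker-Akhiezer function of the Fermi curve is not actually proved. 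The paper closes precisely this gap by citing Theorem 17.9 of Feldman--Kn\"orrer--Trubowitz \cite{FelKT:inf}, which states that the kernel of the Jacobi operator at fixed Bloch multipliers is generically one-dimensional; this forces the finite covering obtained from your Steps 1--2 to be simple, i.e.\ an isomorphism. You need this (or an equivalent spectral-theoretic input) to complete the argument; also, as a minor point, the pole divisor of $\psi$ is the Baker-Akhiezer divisor of the eigenline bundle and is not in general supported on the branch points of $Y$.
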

\begin{proof}
A straightforward computation shows that if $\psi =
(\psi_1,\,\psi_2)^t$ solves \eqref{eq:psi}, then $\psi_1\psi_2$ is
in the kernel of the Jacobi operator. With respect to the two
periods $\gamma_1,\,\gamma_2$ of $u$ we have $\psi(z+\gamma_j) =
\mu_j \,\psi(z)$, and thus for $j=1,\,2$ we have
\begin{equation}
    (\psi_1\psi_2)\, (z+\gamma_j) = \mu_j^2\,(\psi_1\psi_2)\,(z).
\end{equation}
Hence the spectral curve is a finite covering of the Fermi curve
of the Jacobi operator. By Theorem 17.9 in \cite{FelKT:inf},
the kernel of the Jacobi operator is generically one dimensional,
and therefore the spectral
curve is a simple covering of the Fermi curve.
\end{proof}
Let us finally indicate, how we may construct inhomogenous Jacobi
fields. Denote the real quantity
$$
\mathfrak{H} = - \frac{\dot{H}}{2(H^2+1)} = \frac{\dot{Q}}{2HQ}.
$$
Let $\hat{\omega} = \omega^+ + \omega^-$ where
$\omega^+ = \mathfrak{H}\left(z\,\partial u  - \tfrac{1}{2}\right)$
and $\omega^- = \mathfrak{H}\left(\bar{z}\, \bar{\partial}u -
\tfrac{1}{2}\right)$. A straightforward computation shows that
$\hat{\omega}$ is also solution of the inhomogeneous Jacobi equation
\eqref{eq:inhom_jacobi}.
%
%
The corresponding
$\hat{\tau} = \tau^+ + \tau^-$ and
$\hat{\sigma} = \sigma^+ + \sigma^-$ that supplement $\hat\omega$
to a parametric Jacobi field are given by
$\sigma^+ = \overline{\tau^-}$ and
$\sigma^- = \overline{\tau^+}$, where
\begin{equation*} \begin{split}
  \tau^+ &= \frac{\partial \omega^+}{2Q} -
  \frac{\mathfrak{H}\,z\,(\partial u)^2}{2Q}
  - \frac{\mathfrak{H}}{4Q} \int_0^z 2(\partial u)^2\,dw -
  \cosh(2u)\,d\bar{w}\,, \\
  \tau^- &= \mathfrak{H}Hz +
  \frac{\mathfrak{H}}{4Q}\,\bar{z}\,e^{-2u}\,.
  \end{split}
\end{equation*}
The period defects of these functions are given by
\begin{equation*} \begin{split}
  \Delta_{\gamma}\left( \tfrac{1}{2}\partial \tau^+ +
  \tau^+ \partial u
    \right) &= \frac{\mathfrak{H}\,\gamma}{4Q}
    \left( \partial^3 u -
    2(\partial u)^3 \right) - \frac{\mathfrak{H}\,\partial u}{4Q}
    \int_z^{z+\gamma} \hspace{-3mm}2(\partial u)^2\,dw -
  \cosh(2u)\,d\bar{w}\,,\\
  \Delta_{\gamma}\left( \tfrac{1}{2}\partial \tau^-
  + \tau^- \partial u
    \right) &= \mathfrak{H}\,H \,\gamma \,\partial u\,,\qquad
  \Delta_{\gamma}\left( \tfrac{1}{2}\bar{\partial} \sigma^+ +
  \sigma^+ \bar{\partial} u
    \right) = \mathfrak{H}\,H \,\bar{\gamma} \,\bar{\partial}u\,,\\
  \Delta_{\gamma}\left( \tfrac{1}{2}\bar{\partial} \sigma^-
    + \sigma^- \bar{\partial} u
    \right) &= \frac{\mathfrak{H}\,\bar{\gamma}}{4\overline{Q}}
    \left( \bar{\partial}^3 u -
    2(\bar{\partial} u)^3 \right) +
      \frac{\mathfrak{H}\,\bar{\partial} u}{4\overline{Q}}
  \int_z^{z+\gamma}\hspace{-3mm}2(\bar{\partial} u)^2\,d\bar{w} -
  \cosh(2u)\,dw\,.
    \end{split}
\end{equation*}
We may add to these inhomogenous Jacobi fields homogenous Jacobi
fields with the same period defects and obtain periodic inhomogenous
Jacobi fields.

\section{Polynomial Killing fields}

Let $\upsilon = \bigl( \begin{smallmatrix} 0 & 1 \\ 0 & 0
  \end{smallmatrix} \bigr)$, and denote Laurent
polynomials $\C^\times \to \Sl$ with a simple pole at
$\lambda = 0$ and normalized leading coefficients, by
\begin{equation*}
  \Lambda_g = \bigl\{ \xi_0:\C^\times \to \Sl \,\left|\, \right.
  \xi_0(\lambda) = \sum_{j=-1}^g c_j \lambda^j\,,
  \lambda^{g-1}\overline{\xi(1/\bar{\lambda})}^t = -\xi(\lambda)
  \mbox{ and } c_{-1} = \upsilon \bigr\}\,.
\end{equation*}
The condition $\lambda^{g-1}\overline{\xi(1/\bar{\lambda})}^t =
-\xi(\lambda)$ is a reality condition which ensures that
the shifted polynomial $\lambda^{-l}\,\xi$ takes values in
$\su$ on $\mathbb{S}^1$, where
\begin{equation*}
  l = \left\{ \begin{tabular}{ll}
    $\frac{1}{2}(g+1)$ & if $g$ is odd, \\
    $\tfrac{1}{2}(g-1)$ & if $g$ is even. \end{tabular} \right.
\end{equation*}
We denote these skew-hermitian loops by
\begin{equation*}
  \Lambda_g \su = \left\{ \lambda^{-l}\,\xi_0 \left| \, \right.
  \xi_0 \in \Lambda_g \right\}\,.
\end{equation*}
By the Symes method \cite{Symes_80}, the extended framing
$F:\R^2 \times \C^\times \to \SU$ of any harmonic map
$\R^2 \to \mathbb{S}^2$ of finite type
(see Burstall and Pedit \cite{BurP:dre,BurP_adl})
is given by the unitary factor of the Iwasawa decomposition
of
\begin{equation} \label{eq:FB}
  \exp(z\,\xi_0) = F\,B
\end{equation}
for some $\xi_0 \in \Lambda_g$. The loop $\xi_0 \in \Lambda_g$ is
called a \emph{potential} of the corresponding harmonic map in the
generalized Weierstrass representation of Dorfmeister, Pedit and Wu
\cite{DorPW}. By a result of Pinkall and Sterling \cite{PinS}, and
independently Hitchin \cite{Hit:tor}, all Gauss maps of {\sc{cmc}}
tori are of finite type, so equivalently one may solve first solve
the sinh-Gordon equation to obtain some function $u:\R^2 \to \R$ and
then solve $dF = F\,\alpha$ with $F(0) = \mathbbm{1}$ with $\alpha$
as in \eqref{eq:general_alpha} to obtain an extended framing. Recall
that a \emph{polynomial Killing field} in this case is a map $\xi :
\R^2 \to \Lambda_g \su$ which solves
\begin{equation} \label{eq:Killing}
  d\xi = [\,\xi,\,\alpha\,],\,
  \quad \xi(0) = \lambda^{-l} \xi_0
\end{equation}
with $\xi_0 \in \Lambda_g$. The solution to \eqref{eq:Killing}
via the Iwasawa decomposition \eqref{eq:FB} is then
$\xi = F^{-1} \lambda^{-l} \xi_0 \,F =
B\,\lambda^{-l} \xi_0 \, B^{-1}$.
\begin{proposition}
Let $\xi$ be a polynomial Killing field and $\psi_0$ the eigenvector
and $\varphi_0$ the transposed eigenvector of $\xi(0)$:
\begin{equation*}
\xi(0)\,\psi = \nu\,\psi\,,\quad \varphi^t\,\xi(0)= \nu \,\varphi^t
\quad \mbox{ with } \quad \nu^2 =-\det(\xi(0))= \det(\xi).
\end{equation*}
Then the eigenvector $\psi$ and transposed eigenvector $\varphi^t$ of
$\xi$ are the solutions of
\begin{equation*}
d\psi = -\alpha\,\psi \,\mbox{ with } \,\psi(0) = \psi_0\,; \quad
d\varphi^t = \varphi^t\,\alpha \, \mbox{ with } \,
\varphi^t(0)=\varphi_0^t.
\end{equation*}
\end{proposition}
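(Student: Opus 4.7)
The plan is to verify that if $\psi$ is defined by solving the linear system $d\psi = -\alpha\psi$ with initial value $\psi_0$, then $\psi(z)$ remains in the $\nu$-eigenspace of $\xi(z)$ for every $z$; this is the classical Lax-pair observation that the spectral data of $\xi$ is preserved under the flow generated by $-\alpha$, and it simultaneously gives the existence of the advertised $\psi$ and its characterization by the ODE.

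First I would observe that the eigenvalue $\nu$ is constant in $z$. Since $\xi$ is $\Sl$-valued, $\det\xi = -\nu^2$, and the adjugate of a traceless $2\times 2$ matrix satisfies $\mathrm{adj}(\xi) = -\xi$; therefore $d(\det\xi) = \tr(\mathrm{adj}(\xi)\,d\xi) = -\tr(\xi\,[\xi,\alpha]) = 0$ by cyclicity of the trace. Hence $\nu$ is a well-defined constant along the flow (as a function of $\lambda$, for each fixed $\lambda$).

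Next, using the Killing equation $d\xi = [\xi,\alpha]$ and the defining ODE $d\psi = -\alpha\psi$, a one-line computation shows
\[
d(\xi\psi) = [\xi,\alpha]\psi - \xi\alpha\psi = -\alpha\,\xi\psi = -\alpha(\xi\psi),
\]
while $d(\nu\psi) = -\alpha(\nu\psi)$ is immediate since $\nu$ is a scalar constant. Thus both $\xi\psi$ and $\nu\psi$ solve the same first-order linear system $dv = -\alpha v$, and they coincide at $z=0$ by the definition of $\psi_0$. Uniqueness for linear ODEs then forces $\xi\psi \equiv \nu\psi$ throughout $\R^2$. The argument for $\varphi^t$ is the mirror image: using $d(\varphi^t) = \varphi^t\alpha$ and the Lax equation, one checks that $d(\varphi^t\xi) = (\varphi^t\xi)\alpha$ and $d(\nu\varphi^t) = (\nu\varphi^t)\alpha$, and concludes by uniqueness with the given initial data.

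I do not anticipate a genuine obstacle; the statement is essentially a tautological consequence of the Lax equation $d\xi = [\xi,\alpha]$ together with linear ODE uniqueness. The only step requiring a brief verification rather than a one-liner is the constancy of $\nu$, which rests on the traceless $2\times 2$ identity $\mathrm{adj}(\xi) = -\xi$ and the cyclic invariance of the trace.
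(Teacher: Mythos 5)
Your proof is correct, but it takes a different route from the paper's. The paper's proof is a one-liner that exploits the explicit solution of the Lax equation: since $\xi = F^{-1}\lambda^{-l}\xi_0\,F$ with $dF = F\alpha$, the vectors $\psi = F^{-1}\psi_0$ and $\varphi^t = \varphi_0^t F$ are manifestly eigenvectors of $\xi$ (conjugation preserves eigenvectors and eigenvalues), and they visibly satisfy $d\psi = -\alpha\psi$ and $d\varphi^t = \varphi^t\alpha$ with the right initial data. You instead work intrinsically with the Lax equation $d\xi = [\xi,\alpha]$: you first establish constancy of $\nu$ via $d(\det\xi) = -\tr(\xi[\xi,\alpha]) = 0$, then show $\xi\psi$ and $\nu\psi$ solve the same linear system $dv = -\alpha v$ with the same initial value, and invoke ODE uniqueness (and mirror this for $\varphi^t$). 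Both arguments are sound; your version is marginally longer but has the virtue of not depending on the Iwasawa/Symes representation of $\xi$ as a conjugate of its initial value, so it applies verbatim to any Lax pair of this form, whereas the paper's proof is shorter precisely because that explicit conjugation formula is already available in this context.
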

\begin{proof}
The unique solutions are given by $\varphi^t = \varphi_0^t\,F$ and
$\psi = F^{-1}\,\psi_0$, and are eigenvectors of the
polynomial Killing field.
\end{proof}
Conversely, assume we are given $\alpha$ as in
\eqref{eq:general_alpha}
with some periodic finite gap solution of the
sinh-Gordon equation.
Then $\psi,\,\varphi^t$ are the corresponding
Baker-Akhiezer functions if
\begin{equation} \begin{split} \label{eq:BA-functions}
  &d\varphi^t = \varphi^t\,\alpha \mbox{ and } \varphi^t M_F =
    \nu\, \varphi^t\,, \\
  &d\psi = -\alpha\,\psi  \mbox{ and } M_F \psi =
    \nu\, \psi\,,
\end{split}
\end{equation}
where $M_F$ is the monodromy of the extended framing with respect to
the period. Note that $\psi$ and $\varphi^t$ extend meromorphically
into points where $M_F$ is not semi-simple. Furthermore, there
exists a unique polynomial Killing field $\xi$ such that
\begin{equation}
  \varphi^t \xi = \nu\,\varphi^t \mbox{ and }
  \xi\,\psi = \mu\,\psi\,.
\end{equation}
We call $\xi$ a Killing field for $\psi$ and $\varphi$. If
$\varphi^t(0)= \varphi_0^t$ and $\psi(0)= \psi_0^t$, then $Q =
\psi\,\varphi^t$ solves $dQ = [Q,\,\alpha]$ with $Q(0)
=\psi_0\,\varphi_0^t$. Note that $Q$ has rank $\leq 1$ everywhere.
At the zeroes of $\det \xi$ (the branch points of the spectral
curve), we shall describe the dynamics of the spectral curve under
the isoperiodic deformations in terms of the Baker-Akhiezer
functions $\psi,\,\varphi^t$.

Let $u$ be a solution of the sinh-Gordon equation, and $\psi =
(\psi_1,\,\psi_2)^t$ and $\varphi = (\varphi_1,\,\varphi_2)^t$ be
solutions of \eqref{eq:psi} and \eqref{eq:phi} respectively. Let
$\xi_{ij} = \left. (\psi_i \varphi_j) \right|_a$ and $\dot{u} =
\xi_{11}-\xi_{22}$. Recall from Proposition \ref{th:delta_u} that
\begin{equation}\label{eq:pre_dot_psi}
  \dot{\psi}  =
  \frac{1}{\lambda - a} \begin{pmatrix}
    (\lambda + a)\,\xi_{11} & 2\lambda\,\xi_{12} \\
    2a\,\xi_{21} & (\lambda + a)\,\xi_{22} \end{pmatrix}
  \psi \,.
\end{equation}
Let $y$ be an arbitrary point on the spectral curve,
$a=\lambda(y)$ and $Q = \psi(y)\varphi(y)^t$. Then an
easy computation shows that the previous equation
\eqref{eq:pre_dot_psi} can be rewritten as
\begin{equation} \label{eq:dot_psi}
  \dot{\psi} = \tfrac{1}{\lambda - a} \left(
  \bigl( \begin{smallmatrix} \lambda & 0\\0 & a
    \end{smallmatrix} \bigr)\,Q +
  Q \,\bigl( \begin{smallmatrix} a & 0\\0 & \lambda
    \end{smallmatrix} \bigr) \right)\,\psi\,.
\end{equation}
\begin{lemma} The transformation $\dot{\psi}$
is an infinitesimal isospectral transformation, that is,
$\dot{u} = \left.(\psi_1\varphi_1-\psi_2\varphi_2)\right|_a$
is isospectral. All isospectral transformations are
obtained by taking linear combinations of such transformations at all
branch points $a$. The isospectral transformations are a
$g$-dimensional space, where $g$ is the arithmetic genus of the
spectral curve.
\end{lemma}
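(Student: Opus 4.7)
The plan is to translate the given eigenvector variation into a Lax variation of the polynomial Killing field $\xi$ itself. Writing $\dot\psi=X\psi$ with $X=(\lambda-a)^{-1}\bigl(\bigl(\begin{smallmatrix}\lambda&0\\0&a\end{smallmatrix}\bigr)Q+Q\bigl(\begin{smallmatrix}a&0\\0&\lambda\end{smallmatrix}\bigr)\bigr)$, an analogous derivation applied to $\varphi$ (which solves \eqref{eq:phi}, the sign-reversed twin of \eqref{eq:psi}) yields $\dot\varphi^t=-\varphi^t X$. Differentiating $\xi\psi=\mu\psi$ and $\varphi^t\xi=\mu\varphi^t$ and using that the scalar $\mu$ commutes with $X$ gives
\begin{equation*}
  (\dot\xi+[\xi,X]-\dot\mu\,\mathbbm{1})\psi = 0,\qquad
  \varphi^t(\dot\xi+[\xi,X]-\dot\mu\,\mathbbm{1}) = 0.
\end{equation*}
Since $y=\psi^t\varphi$ is a non-zero constant by Proposition~\ref{thm:omega_tau_sigma}(i), the only $2\times 2$ matrix annihilating $\psi$ from the right and annihilated by $\varphi^t$ from the left is zero, so $\dot\xi+[\xi,X]=\dot\mu\,\mathbbm{1}$. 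Comparing traces (both $\dot\xi\in T\Lambda_g\su$ and the commutator $[\xi,X]$ are traceless) then forces $\dot\mu=0$, and we obtain the Lax equation $\dot\xi=[X,\xi]$. This immediately preserves $\det\xi$, hence the spectral curve $\nu^2=-\det\xi$ is fixed, which is the isospectrality claim.

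A technical point to verify is that $\dot\xi$ genuinely lies in $T\Lambda_g\su$. Although $X$ has a simple pole at $\lambda=a$, the residue of $[\xi,X]$ there equals $[\xi(a),\,\mathrm{Res}_{\lambda=a}X]$, and since $\mathrm{Res}_{\lambda=a}X$ is built from the rank-one projector $Q=\psi\varphi^t$ onto the eigenline of $\xi(a)$ at the spectral point over $a$, this commutator vanishes and $\dot\xi$ is polynomial in $\lambda$. The reality condition on $\dot\xi$ is preserved automatically upon pairing the deformation at $a$ with its conjugate at $\bar a^{-1}$, using the transformation laws of $\psi$ and $\varphi$ under the involutions $\eta$ and $\varrho$ listed in the involutions subsection.

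For the dimension count, I would invoke the classical Krichever--Novikov correspondence: polynomial Killing fields in $\Lambda_g\su$ whose spectral curve is a fixed $Y$ of arithmetic genus $g$ are parameterised by the real torus of dimension $g$ cut out inside $\mathrm{Jac}(Y)$ by the reality involutions; consequently the tangent space of isospectral deformations at any $\xi$ has real dimension $g$. Each branch point $a$ of $Y$ provides a deformation $\dot\xi^{(a)}$ as above, and under the Krichever correspondence this maps to the class of the holomorphic differential $Q(a)\,d\lambda/\nu$ on $Y$. The main obstacle of the proof lies here: one must show that these branch-point differentials linearly span $H^0(Y,\Omega^1_Y)\cong\C^g$. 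This reduces to a Vandermonde-type non-degeneracy statement for the evaluations of $\psi,\varphi$ at the branch locus, and is the principal non-routine step beyond the straightforward Lax-form computation that establishes isospectrality itself.
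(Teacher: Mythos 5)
Your first part (isospectrality) follows the paper's route: you pass from $\dot{\psi}=X\psi$ to a Lax equation for the polynomial Killing field, check regularity at $\lambda=a$ via the commutation of $Q$ with $\xi(a)$ (exactly the paper's observation), and conclude that $\det\xi$, hence the spectral curve, is preserved. There is, however, a local flaw: from $M\psi=0$ and $\varphi^tM=0$ at a \emph{single} point of the spectral curve you cannot conclude $M=0$. Writing $P=\psi\varphi^t/(\psi^t\varphi)$, any $M=c(\mathbbm{1}-P)$ satisfies both conditions, and the honest identity is $\dot{\xi}+[\xi,X]=(\dot{\nu}/\nu)\,\xi$, which is traceless, so your trace comparison cannot detect $\dot{\nu}$; as written the step that yields $\dot{\mu}=0$ is circular. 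To close it you either need the eigenvector data on both sheets over $\lambda$, or (as the paper implicitly does) you take $\dot{\xi}=\tfrac{1}{\lambda-a}[\cdots,\xi]$ as the induced tangent vector to the space of Killing fields and read off $\dot{\nu}=0$ from $\tfrac{d}{dt}\det\xi=-\mathrm{tr}(\xi\,[X,\xi])=0$.

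The genuine gap is in the second and third assertions of the lemma. You reduce the statement that the branch-point deformations span all isospectral deformations to an unproven ``Vandermonde-type non-degeneracy statement'' and explicitly flag it as the main obstacle; but that spanning statement \emph{is} the content to be proved, so the proof is incomplete there. The paper disposes of it with a short residue argument on the spectral curve: $P=\psi\varphi^t/(\psi^t\varphi)$ is meromorphic with poles only at the branch points, so pairing it with a meromorphic differential having a simple pole at an arbitrary point and first-order zeros over $\lambda=0$ and $\lambda=\infty$ and summing residues expresses the value of $P$ anywhere as a linear combination of its values at the $2g$ branch points; pairing instead with the $g$ holomorphic differentials produces $g$ relations among those $2g$ generators, giving the dimension $g$ directly, with no appeal to the Jacobian. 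Your invocation of the Krichever correspondence for the dimension is a legitimate external fact, but without the spanning step it does not deliver the lemma; I would adopt the residue argument, which proves both claims at once.
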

\begin{proof}
Let $\xi$ be a polynomial Killing field for $\psi$ and $\varphi$,
and $a \in \C^*$ any branch point of the spectral curve. Then
$\xi(a)\,Q = \nu\,Q = Q\,\xi(a)$.

Since $Q$, and the matrices $\bigl( \begin{smallmatrix} \lambda & 0\\0 & a
    \end{smallmatrix} \bigr)$ and $\bigl( \begin{smallmatrix} a & 0\\0 & \lambda
    \end{smallmatrix} \bigr)$ commute with $\xi$ at $\lambda=a$, the
commutator $\left[ \left(
  \bigl( \begin{smallmatrix} \lambda & 0\\0 & a
    \end{smallmatrix} \bigr)\,Q +
  Q \,\bigl( \begin{smallmatrix} a & 0\\0 & \lambda
    \end{smallmatrix} \bigr) \right),\,\xi \right]$
has a zero at $\lambda =a$.

From \eqref{eq:dot_psi} and $\dot{\xi}\,\psi + \xi\,\dot{\psi} =
\nu\,\dot{\psi} + \dot{\nu}\,\psi = \nu\,\dot{\psi}$, we conclude that
$$
\dot{\xi} = \tfrac{1}{\lambda - a}\,\left[ \left(
  \bigl( \begin{smallmatrix} \lambda & 0\\0 & a
    \end{smallmatrix} \bigr)\,Q +
  Q \,\bigl( \begin{smallmatrix} a & 0\\0 & \lambda
    \end{smallmatrix} \bigr) \right),\,\xi \right]
$$
is a tangent vector in the complexified space of Killing fields at the point
$\xi$ with $\dot{\nu}=0$.

Now we claim that all such transformations are linear combinations
of such transformations at all branch points of the spectral curve.
The meromorphic map $P = \tfrac{\psi\,\varphi^t}{\psi^t\,\varphi}$
has poles only at the branch points. For $g=0$, the space of Killing
fields is zero dimensional, and there are no infinitesimal
isospectral transformations. For $g \geq 1$ there exists for each
point a meromorphic differential $\omega$ with a first order pole at
that point, and first order zeroes at $\lambda =0$ and $\lambda
=\infty$. The sum over all residues of $\omega\,P$ vanishes, and
consequently, the value of $P$ at such a point is a linear
combination of the values of $P$ at the branch points. This proves
the claim.

Finally we remark, that for all holomorphic one-forms $\omega$, the
sum over all residues of $\omega P$ vanishes. Therefore all
non-trivial holomorhic one-forms yield a non-trivial relation on the
corresponding infinitesimal transformations. Therefore the space of
these transformations span a $g$-dimensional space of isospectral
transformations.
\end{proof}
In the next lemma we exhibit the generators for which
$\dot{u} = \left.\partial(\psi_1\psi_2)\right|_a$
is non-isospectral.
\begin{lemma}
Let $u$ be a solution of the sinh-Gordon equation, and
$\psi = (\psi_1,\,\psi_2)^t$ and
$\varphi = (\varphi_1,\,\varphi_2)^t$ be solutions of
\eqref{eq:psi} and \eqref{eq:phi} respectively.
Let
\begin{equation}
  Q = \tfrac{d}{dy} \left.\left( \psi \,\,\sigma^*\varphi^t
  \right) \right|_{\lambda=a}
\end{equation}
be the derivative with respect to a local parameter $y$ at some
branch point $\lambda=a \in \Sigma_g$. Then
\begin{equation}
  \dot{\psi} = \tfrac{1}{\lambda - a} \left(
  \bigl( \begin{smallmatrix} \lambda & 0\\0 & a
    \end{smallmatrix} \bigr)\,Q +
  Q \,\bigl( \begin{smallmatrix} a & 0\\0 & \lambda
    \end{smallmatrix} \bigr) \right)\,\psi
\end{equation}
is an infinitesimal non-isospectral transformation that only moves
the branch point $a$ while keeping all the other branch points
fixed. The space of all these deformations generates the space of
all non-isospectral transformations.
\end{lemma}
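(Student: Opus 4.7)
The plan is to parallel the preceding isospectral lemma, tracking how the derivative structure of $Q$ changes the outcome.

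First, I would establish the key algebraic relation for $Q$. Since $y$ is a local parameter at a branch point over $\lambda=a$, the function $\lambda-a$ vanishes to second order in $y$ at $y_{0}$, and the hyperelliptic involution $\sigma$ acts as $y\mapsto -y$; in particular $(\sigma^{*}\varphi)(y_{0})=\varphi(y_{0})$, and $R(y):=\psi(y)\,\sigma^{*}\varphi^{t}(y)$ satisfies $R(y_{0})=\psi(y_{0})\varphi^{t}(y_{0})$. From $\varphi^{t}\xi=\nu\,\varphi^{t}$ and $\sigma^{*}\nu=-\nu$ one has $\xi\,R=\nu R$ but $R\,\xi=-\nu R$. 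Differentiating both identities in $y$ and evaluating at $y_{0}$ with $\lambda'(y_{0})=0$ and $\nu(y_{0})=0$ gives
\begin{equation*}
  \xi(a)\,Q = \nu'(y_{0})\,R(y_{0}), \qquad Q\,\xi(a) = -\nu'(y_{0})\,R(y_{0}),
\end{equation*}
so $\{Q,\xi(a)\}=0$. This is the essential divergence from the isospectral lemma, where both sides were zero.

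Next, I would extract $\dot{\xi}$ and $\dot{\nu}$ from $\dot\psi=M\psi$ with $M=(\lambda-a)^{-1}\bigl(\bigl(\begin{smallmatrix}\lambda&0\\0&a\end{smallmatrix}\bigr)Q+Q\bigl(\begin{smallmatrix}a&0\\0&\lambda\end{smallmatrix}\bigr)\bigr)$. Differentiating $\xi\psi=\nu\psi$ in the deformation parameter yields $\dot\xi\psi=\dot\nu\,\psi+[M,\xi]\psi$; combined with the analogous equation for $\sigma^{*}\psi$ this determines $\dot\xi$ uniquely. Because $\{Q,\xi(a)\}=0$ rather than $[Q,\xi(a)]=0$, the bracket $[L_{\lambda}Q+QR_{\lambda},\xi]$ does not vanish at $\lambda=a$, so $M$ contributes a genuine simple pole to $[M,\xi]$. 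This pole is balanced by the variation of $\nu$ itself: if the branch point shifts to $a+\varepsilon\dot a$, then $\dot\nu\sim \dot a\,(\lambda-a)^{-1/2}$, producing a residue that cancels the pole and keeps $\dot\xi$ in $\Lambda_{g}\,\Sl$. I would also verify, paralleling Proposition \ref{th:delta_u}, that the resulting $\dot u$ solves the linearized sinh-Gordon equation.

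Third, I would show that no other branch point moves, and then close with the dimension count. At any branch point $b\neq a$, the matrix $M$ is regular, so the computation of the preceding lemma applies verbatim at $\lambda=b$: the first-order variation of $\det\xi$ vanishes there, i.e.\ $\dot\nu(b)=0$, and $b$ remains a branch point. For the spanning statement, the $2g$ branch points over $\C^{\ast}$ are paired by $\varrho$ into $g$ independent real directions, which together with the $g$-dimensional isospectral space of the preceding lemma exhaust the $2g$-dimensional tangent space to the moduli of polynomial Killing fields in $\Lambda_{g}\,\su$.

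The main obstacle is the second step: making precise the balance between the simple pole of $M$ at $\lambda=a$ and the branch-point singularity of $\dot\nu$ so that $\dot\xi$ is a bona fide element of $\Lambda_{g}\,\Sl$. This requires a careful residue calculation leveraging the rank-one projector $P=\psi\varphi^{t}/(\psi^{t}\varphi)$ and the anticommutation $\{Q,\xi(a)\}=0$ to match the two singular contributions.
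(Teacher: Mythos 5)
Your proposal follows the paper's own argument: you differentiate the eigenvalue relations $\xi R=\nu R$ and $R\,\xi=-\nu R$ at the branch point (where $\lambda'(y_0)=\nu(y_0)=0$) to obtain $\xi(a)\,Q=\nu'(y_0)R(y_0)=-Q\,\xi(a)$ with $R(y_0)=\psi\varphi^t$ proportional to the nilpotent $\xi(a)$ --- which is exactly the paper's statement that $[Q,\xi]$ is proportional to $\xi$ at $\lambda=a$ --- and then feed this into $\dot{\xi}\psi+\xi\dot{\psi}=\nu\dot{\psi}+\dot{\nu}\psi$ to force $\dot{\nu}\propto\nu/(\lambda-a)$, so that only the branch point $a$ moves. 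The residue-balancing and spanning steps you flag as delicate are treated no more explicitly in the paper's proof, so the proposal is correct and essentially identical in approach.
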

\begin{proof}
Since the branch points are zeroes of $d\lambda$, the derivative of
$\lambda$ with respect to $y$ vanishes. Hence we may apply
Proposition~\ref{th:delta_u} in this situation. Since the square of
the eigenvalue $\nu$ of $\xi$ is equal to $\lambda^{-1}$ times a
polynomial with respect to $\lambda$, whose roots are the branch
points in $\C^*$, we have to show that $2\nu\dot{\nu}$ vanishes at
all branch points in $\C^*$ with the exception of $\lambda=a$. Hence
$\dot{\nu}$ has to be proportional to $\det(\xi)/(\nu(\lambda-a))$.
From \eqref{eq:dot_psi} and $\dot{\xi}\,\psi + \xi\,\dot{\psi} =
\nu\,\dot{\psi} + \dot{\nu}\,\psi = \nu\,\dot{\psi}$ and
$\det(\xi)\xi^{-1}=-xi$, we conclude that
$$
\left[ \left(
  \bigl( \begin{smallmatrix} \lambda & 0\\0 & a
    \end{smallmatrix} \bigr)\,Q +
  Q \,\bigl( \begin{smallmatrix} a & 0\\0 & \lambda
    \end{smallmatrix} \bigr) \right),\,\xi \right]
$$
has to be proportional to $\xi$ at $\lambda=a$. If we differentiate
the equations
\begin{align*}
\xi\psi&=\nu\psi&\sigma^*\varphi^t\xi&=\nu\sigma^*\varphi^t
\end{align*}
at the branch points, we obtain that the commutator $[Q,\,\xi]$ is
at $\lambda=a$ proportional to $\xi$. This implies the claim.
\end{proof}
With the help of the inhomogenous Jacobi fields described at the end
of the last section, one may construct non-isospectral deformations
of {\sc{cmc}} tori in $\mathbb{S}^3$.
%


\bibliographystyle{amsplain}

\def\cprime{$'$}
\providecommand{\bysame}{\leavevmode\hbox
to3em{\hrulefill}\thinspace}
\providecommand{\MR}{\relax\ifhmode\unskip\space\fi MR }
\providecommand{\MRhref}[2]{%
  \href{http://www.ams.org/mathscinet-getitem?mr=#1}{#2}
} \providecommand{\href}[2]{#2}


\end{document}